\mathchardef\mhyphen="2D
\newcommand{\vpinn}{{V\kern-0.20em P}}
\newtheorem{theorem}{Theorem}[section]
\newtheorem{proposition}[theorem]{Proposition}
\newtheorem{lemma}[theorem]{Lemma}
\begin{document}

\title{A Deep Fourier Residual Method for solving PDEs using Neural Networks}
\author[1]{Jamie M. Taylor}
\author[2,3,4]{David Pardo}
\author[2,5]{Ignacio Muga}
\affil[1]{Department of Quantitative Methods, CUNEF University, Madrid, Spain}
\affil[2]{Basque Center for Applied Mathematics (BCAM), Bilbao, Bizkaia, Spain}
\affil[3]{University of the Basque Country (UPV/EHU), Leioa, Spain}
\affil[4]{Ikerbasque (Basque Foundation for Sciences), Bilbao, Spain}
\affil[5]{Instituto de Matem\'aticas, Pontificia Universidad Cat\'olica de Valpara\'iso, Chile.}
\date{}
\maketitle

\abstract{
When using Neural Networks as trial functions to numerically solve PDEs, a key choice to be made is the loss function to be minimised, which should ideally correspond to a norm of the error. In multiple problems, this error norm coincides with--or is equivalent to--the $H^{-1}$-norm of the residual; however, it is often difficult to accurately compute it. This work assumes rectangular domains and proposes the use of a Discrete Sine/Cosine Transform to accurately and efficiently compute the $H^{-1}$ norm. The resulting Deep Fourier-based Residual (DFR) method efficiently and accurately approximate solutions to PDEs. This is particularly useful when solutions lack $H^{2}$ regularity and methods involving strong formulations of the PDE fail. We observe that the $H^1$-error is highly correlated with the discretised loss during training, which permits accurate error estimation via the loss.
}

\section{Introduction}
The use of Deep Learning techniques employing Neural Networks (NNs) have been sucessful to solve a wide range of data-based problems across fields such as image proccessing, healthcare, and autonomous cars \cite{afouras2018deep,alam2020survey,esteva2019guide, gupta2021deep,litjens2017survey,purushotham2018benchmarking,shorten2019survey, sluzalec2022quasi}. Recently, there has been a surge of interest in the use of neural networks as function spaces that can be employed to obtain numerical solutions of Partial Differential Equations (PDEs) \cite{berg2018unified, brevis2021machine, lu2021deepxde, paszynski2021deep, ruthotto2020deep, samaniego2020energy}. Owing to the universal approximation theorem, and variants in Sobolev spaces \cite{cybenko1989approximation,hornik1991approximation,hornik1990universal,kidger2020universal}, it is known that a sufficiently wide or deep NN is able to approximate any given continuous function on a compact domain with arbitrary accuracy, and thus they make suitable function spaces for solving PDEs. The use of automatic differentiation ({\it autodiff}) \cite{baydin2018automatic} facilitates efficient numerical evaluation of derivatives, which allows algorithmic differentiation of the neural network itself, as well as the use of gradient-based optimisation techniques such as Stochastic Gradient Descent (SGD) \cite{bottou2010large} and Adam \cite{kingma2015Adam} in order to minimise appropriate loss functions over a space of neural networks. 

A quantitative version of the Universal Approximation Theorem \cite{barron1993universal} demostrates that NNs can approximate without suffering the curse of dimensionality, requiring far fewer degrees of freedom to approximate functions with high-dimensional inputs than classical piecewise-linear function spaces, making them an attractive function space for solving PDEs, in particular, in high-dimensional problems. Beyond solving single instances of PDEs, NNs have shown a capacity to learn {\it operators} that solve families of parametrised PDEs, allowing rapid ``online" evaluation of solutions after an ``offline" training of the network \cite{chen1995universal,goswami2022physics,lagaris1998artificial, lu2019deeponet,lu2021learning}. 

The flexibility of NNs to solve many classes of PDEs is owed to a general and simple framework, whereby one chooses an appropriate architecture of the NN, a loss function, whose minimiser should be an exact solution of the PDE, and an optimisation procedure to attempt to minimise the loss function. In this article, we focus on the choice of loss function when solving PDEs with NN function spaces. Previous works have considered losses based on strong \cite{jagtap2020conservative,raissi2019physics,sirignano2018dgm} and weak \cite{khodayi2020deep,khodayi2020varnet,kharazmi2019variational} formulations of the PDE. However, the choice of a perfect loss function is generally not obvious as in practice solutions will only reach local minima, and the loss and error may have distinct or unknown convergence rates as one approaches either a local minimiser or a practically unattainable global minimiser.

Generally, a PDE operator can be described by a (possibly nonlinear) map $\mathcal{R}:X\to Y$, where $X,Y$ are Banach spaces. The PDE then takes the form
\begin{equation}\label{eqGenericPDE}
\mathcal{R}(u)=0.
\end{equation}
For example, Poisson's equation, $-\Delta u(x)=f(x)$, on a domain $\Omega$ with homogeneous Dirichlet boundary condition and $f\in L^2(\Omega)$ may be interpreted in strong form via the map\\ ${\mathcal{R}^s:H^2(\Omega)\cap H^1_0(\Omega)\to L^2(\Omega)}$ given by 
\begin{equation}
\mathcal{R}^s(u)=\Delta u+f,
\end{equation}
or in weak form via the map $\mathcal{R}^w:H^1_0(\Omega)\to H^{-1}(\Omega):=[H^1_0(\Omega)]^*$ given by 
\begin{equation}
\langle\mathcal{R}^w(u),v\rangle_{H^{-1}\times H^1}=\int_\Omega\nabla u(x)\cdot\nabla v(x)-f(x)v(x)\,dx.
\end{equation} 
When employing NNs to numerically solve PDEs, a loss is often selected as the norm of the PDE residual in $Y$, that is,
\begin{equation}
\mathcal{L}(u):=||\mathcal{R}(u)||_Y.
\end{equation}
One is generally confronted with two issues within this framework. The first is that norms on function spaces are generally given by integrals and thus a quadrature rule must be employed in order to numerically approximate $\mathcal{L}(u)$. In contrast to polynomial-based function spaces, an exact quadrature rule is generally unobtainable. Moreover, a poor choice of quadrature rule can lead to a form of ``overfitting" and poor approximation of solutions \cite{rivera2022quadrature}. The second issue is that in infinite dimensional spaces not all norms are equivalent, and thus the choice of norm on $Y$ can directly affect the convergence of the error during training. Ideally, we should employ norms on $X$ and $Y$ which are compatible in the sense that the $X$-norm of the error is equivalent to the $Y$-norm of the residual, leading to a residual minimisation method \cite{brevis2021machine,brevis2022neural,cier2021automatically}. 

Related to this second issue, progress has been made in the direction of {\it a priori} and {\it a posteriori} error estimates that allow estimation of the error via the loss  \cite{berrone2021variational,berrone2022solving, de2022error,mishra2021estimates,shin2020convergence,shin2020error}. The works rely on coercivity-type estimates of the error in terms of the exact norms, as well as a control of quadrature and training errors.

Many PDEs can be expressed in weak form via \eqref{eqGenericPDE} with $X=U$ is a space of trial functions, and $Y=V^*$, where $V$ is the space of test functions. That is,
\begin{equation}\label{eqWeakFormSomething}
\langle \mathcal{R}(u),v\rangle_{V^*\times V}=0\hspace{0.25cm}\forall v\in V.
\end{equation}
We commonly consider cases where $\mathcal{R}$ represents a linear and inhomogeneous PDE, and thus may be expressed in the form 
\begin{equation}\label{eqLinearPDE}
\langle\mathcal{R}(u),v\rangle_{V^*\times V}=b(u,v)-f(v),
\end{equation} where $f\in V^*$ and $b:U\times V\to\mathbb{R}$ is a bilinear form. It is clear that the PDE, in weak form, is equivalent to the statement that $||\mathcal{R}(u^*)||=0$ for any norm on $V^*$. The  most natural norm is the dual norm, induced by the norm on $V$, defined via 
\begin{equation}\label{eqDualNorm}
||f||_{V^*}=\sup\limits_{v\in V\setminus \{0\}}\frac{|f(v)|}{||v||_V}. 
\end{equation}

The advantage of employing the dual norm on $V^*$ is that, under certain assumptions that we will outline in more detail in \Cref{secDualNorms}, one can relate the dual norm of the residual to the norm of the error. Specifically, $\frac{1}{M}||\mathcal{R}(u)||_{V^*}\leq ||u-u^*||_U\leq \frac{1}{\gamma}||\mathcal{R}(u)||_{V^*}$, where $u$ is a candidate solution, $u^*$ is the exact solution, and $M,\gamma$ are positive, problem dependent, constants. This allows $||\mathcal{R}(u)||_{V^*}$ to be used as an error estimator, without needing to know the exact solution. In addition, if we can find a way to numerically approximate the dual norm, we can employ this as a loss function to be minimised over a trial function space.

We propose a Deep Fourier Residual (DFR) method to approximate the error of candidate solutions of PDEs in $H^1$ via an approximation of the dual norm of the residual of the PDE operator. The dual norm is then employed as a loss function to be minimised. The advantage of such a method is that the resulting norm is equivalent to the $H^1$-error of the solutions for certain well-posed problems.

We consider several numerical examples, comparing the DFR approach to other losses employed to solve differential equations using NNs. Our numerical examples exhibit strong correlation between the proposed loss and $H^1$-error during the training process. For sufficiently regular problems, our DFR method is qualitatively equivalent to existing methods in the literature (\Cref{ModelProblem1}) \cite{kharazmi2019variational, raissi2019physics}. However, in less regular problems, our method leads to significantly more accurate solutions, both for an equation that admits a smooth solution with large gradients (\Cref{ModelProblem2}), and for an elliptic equation with discontinuous parameters (\Cref{ModelProblem3}). Indeed, methods based on the strong formulation of the PDE, such as PINNs \cite{raissi2019physics}, cannot be implemented for such applications. The DFR method is shown to be advantageous both when solutions admit $H^1\setminus H^2$ regularity, and in regular problems where the forcing term has a large discrepency between its $L^2$ and $H^{-1}$ norm. We then consider further numerical experiments which demonstrate the DFR method's capability in a linear equation with point source (\Cref{ModelProblemDelta}), a nonlinear ODE (\Cref{ModelProblemNonlinear}), and a 2D linear problem (\Cref{ModelProblem2D}). 

The DFR method is currently limited to rectangular domains where each face has either a Dirichlet or a Neumann Boundary condition. We rely on a Fourier-type representation of the $H^{-1}$ norm that can be performed efficiently using the one-dimensional Discrete Cosine Transform and Discrete Sine Transform (DCT/DST), which are based on the Fast Fourier Transform (FFT), in each coordinate direction. Generally, an extension of our techniques to PDEs on arbitrary domains $\Omega$ would require access to an orthonormal basis of $H^1(\Omega)$, whose obtention may prove more costly than solving the PDE itself. Furthermore, the DST/DCT takes advantage of the FFT, which allows an inexpensive evaluation of the loss and would not be available in general domains. A possibility for the extension of the DFR method to arbitrary domains include methods analogous to embedded domain methods \cite{duster2008finite, glowinski2007distributed,larsson2022finite, mittal2005immersed, peskin2002immersed,ramiere2007fictitious, schillinger2015finite}, which embeds domains with complex geometry into a simpler fictious computational domain. It is also possible to borrow ideas from Goal-Oriented adaptivity (e.g., \cite{prudhomme1999goal}) to the proposed DFR method, although this will be postponed for a future work.

The structure of the paper is as follows. In \Cref{secPrelim} we cover some preliminary concepts. The theoretical groundwork for the definition of the DFR method is presented in \Cref{secTheory}, with our proposed loss defined in \Cref{subsecDefinitionLoss}. \Cref{secCompare} contains numerical examples comparing our proposed loss function with the VPINNs and collocation losses, which are roughly equivalent in regular problems, but we will demonstrate that the DFR method greatly outperforms VPINNs and PINNs when solutions are less regular. In \Cref{secExperiments} we consider further numerical experiments that demonstrate the DFR in equations with a point source, nonlinearities, and 2D results. Finally, concluding remarks are made in \Cref{secConc}. 

\section{Preliminaries}\label{secPrelim}

\subsection{Neural Networks}

Neural networks are functions expressed as  compositions of more elementary functions. In the simplest case of a fully connected feed-forward NN, an $M$-layer neural network is described by $M$ layer functions, $L_i:\mathbb{R}^{N_i}\to\mathbb{R}^{N_{i+1}}$, that are of the form 
\begin{equation}
L_i(x)=\sigma_i\left(A_ix+b_i\right),
\end{equation}
where $A_i$ is an $N_{i}\times N_{i+1}$ matrix, $b_i\in \mathbb{R}^{N_{i+1}}$, and $\sigma_i$ is an {\it activation function} that may depend on the layer index $i$ and acts component-wise on vectors. A fully-connected feed forward neural network is a function $\tilde{u}:\mathbb{R}^{N_1}\to\mathbb{R}^{N_{M+1}}$ defined by 
\begin{equation}\label{eqTildeU}
\tilde{u}(x)=L_{M}\circ L_{M-1}\circ\hdots \circ L_{1}(x).
\end{equation}
The final activation function $\sigma_M$ is taken to be the identity, $\sigma_M(x)=x$. The parameters $A_i,b_i$, known as the {\it weights} and {\it biases} of the network, parametrise the neural network. Optimisation over a neural network space with fixed architecture corresponds to identifying the optimal values of these trainable parameters. 

In the context of NNs for PDEs, we often need to impose homogeneous Dirichlet boundary conditions on our candidate solutions. In this work, we will do this by introducing a cutoff function. That is, if we wish to consider functions $u:\Omega\to\mathbb{R}$ so that for a subset of the boundary $\Gamma_D\subset\partial\Omega$, $u|_{\Gamma_D}=u_0$, we take $\tilde{u}$ to be of the form \eqref{eqTildeU}, and define 
\begin{equation}\label{eqArchit}
u(x)=\phi_1(x)\tilde{u}(x)+\phi_2(x),
\end{equation}
where $\phi_1$ is a function satisfying $\phi_1|_{\Gamma_D}=0$ and $\phi_1>0$ on $\bar{\Omega}\setminus\Gamma_D$, and $\phi_2|_{\Gamma_D}=u_0$. 

We include a schematic of this architecture in \Cref{figArchitecture}

\begin{figure}[H]

\begin{center}
\begin{tikzpicture}

\draw (0,2.85) node{Input};
\draw (0,2.5) node {$\overbrace{\hspace{1cm}}^{\hspace{1cm}}$} ;

\draw (5.5,4.1) node {Hidden layers};
\draw (5.5,3.75) node {$\overbrace{\hspace{8cm}}^\text{\hspace{1cm}}$} ;
%
%
%
%

\draw (4.5,-1.1) node {Trainable layers};
\draw (4.5,-0.75) node {$\underbrace{\hspace{10cm}}_\text{\hspace{1cm}}$} ;

\draw (9,2.85) node {$\tilde{u}$};
\draw (9,2.5) node {$\overbrace{\hspace{1cm}}^{\hspace{1cm}}$};

\draw(11,2.85) node {Output};

\draw (11,2.5) node {$\overbrace{\hspace{1cm}}^\text{\hspace{1cm}}$};
\Vertex[x=10,y=1.5,style={opacity=0.0,color=white!0},size=0.00000001]{inv1}
\Vertex[x=10,y=0,style={opacity=0.0,color=white!100}]{inv2}
\draw (10,0) node {Apply B.C.};
\Edge[Direct,lw=1pt](inv2)(inv1)

\Vertex[x=0,y=1.5,label=$x$,color=white,size=1,fontsize=\normalsize]{X}
\Vertex[x=2,y=3,color=white,size=0.75]{A11}
\Vertex[x=4,y=3,color=white,size=0.75]{A21}
\Vertex[x=7,y=3,color=white,size=0.75]{A31}
\Vertex[x=2,y=2,color=white,size=0.75]{A12}
\Vertex[x=4,y=2,color=white,size=0.75]{A22}
\Vertex[x=7,y=2,color=white,size=0.75]{A32}
\Vertex[x=2,y=0,color=white,size=0.75]{A13}
\Vertex[x=4,y=0,color=white,size=0.75]{A23}
\Vertex[x=7,y=0,color=white,size=0.75]{A33}

\Vertex[x=9,y=1.5,label=$\tilde{u}(x)$,color=white,size=1,fontsize=\normalsize]{tildeu}
\Vertex[x=11,y=1.5,label=$u(x)$,color=white,size=1,fontsize=\normalsize]{u}

\Vertex[x=5.5,y=3,style={color=white},label=$\color{black}\hdots$,size=0.75]{Ah1}
\Vertex[x=5.5,y=2,style={color=white},label=$\color{black}\hdots$,size=0.75]{Ah2}
\Vertex[x=5.5,y=0,style={color=white},label=$\color{black}\hdots$,size=0.75]{Ah3}

\Edge[Direct,lw=1pt](X)(A11)
\Edge[Direct,lw=1pt](X)(A12)
\Edge[Direct,lw=1pt](X)(A13)

\Edge[color=white,label={$\color{black}\vdots$}](A12)(A13)
\Edge[color=white,label={$\color{black}\vdots$}](A22)(A23)
\Edge[color=white,label={$\color{black}\vdots$}](Ah2)(Ah3)
\Edge[color=white,label={$\color{black}\vdots$}](A32)(A33)

\Edge[Direct,lw=1pt](A11)(A21)
\Edge[Direct,lw=1pt](A11)(A22)
\Edge[Direct,lw=1pt](A11)(A23)
\Edge[Direct,lw=1pt](A11)(A21)
\Edge[Direct,lw=1pt](A12)(A21)
\Edge[Direct,lw=1pt](A12)(A22)
\Edge[Direct,lw=1pt](A12)(A23)
\Edge[Direct,lw=1pt](A13)(A21)
\Edge[Direct,lw=1pt](A13)(A22)
\Edge[Direct,lw=1pt](A13)(A23)

\Edge[Direct,lw=1pt](A23)(Ah1)
\Edge[Direct,lw=1pt](A23)(Ah2)
\Edge[Direct,lw=1pt](A23)(Ah3)
\Edge[Direct,lw=1pt](A21)(Ah1)
\Edge[Direct,lw=1pt](A21)(Ah2)
\Edge[Direct,lw=1pt](A21)(Ah3)
\Edge[Direct,lw=1pt](A23)(Ah1)
\Edge[Direct,lw=1pt](A23)(Ah2)
\Edge[Direct,lw=1pt](A23)(Ah3)

\Edge[Direct,lw=1pt](Ah1)(A31)
\Edge[Direct,lw=1pt](Ah1)(A32)
\Edge[Direct,lw=1pt](Ah1)(A33)
\Edge[Direct,lw=1pt](Ah2)(A31)
\Edge[Direct,lw=1pt](Ah2)(A32)
\Edge[Direct,lw=1pt](Ah2)(A33)
\Edge[Direct,lw=1pt](Ah3)(A31)
\Edge[Direct,lw=1pt](Ah3)(A32)
\Edge[Direct,lw=1pt](Ah3)(A33)

\Edge[Direct,lw=1pt](A31)(tildeu)
\Edge[Direct,lw=1pt](A32)(tildeu)
\Edge[Direct,lw=1pt](A33)(tildeu)

\Edge[Direct,lw=1pt](tildeu)(u)

\end{tikzpicture}
\caption{NN architecture}
\label{figArchitecture}
\end{center}
\end{figure}
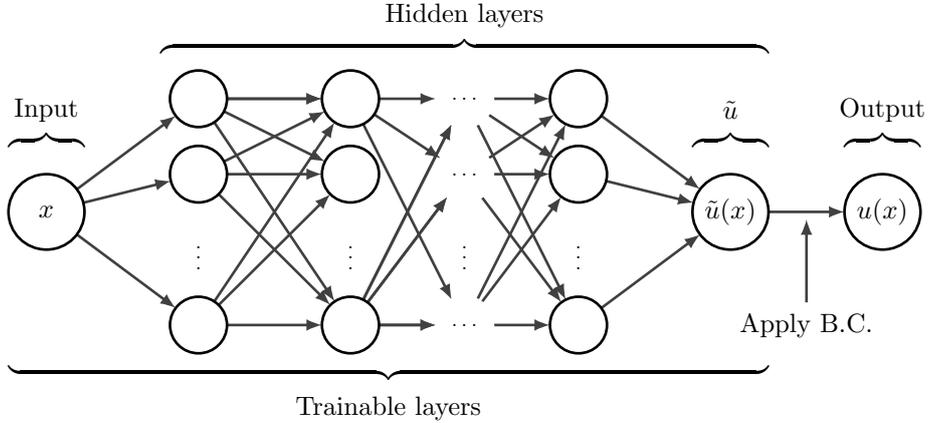

\subsection{PINN and VPINN losses}
Whilst there any many discrete losses employed when solving PDEs via NNs, in this section we outline two particular cases, the PINN (collocation), and the VPINN losses, which are based on strong and weak formulations of the PDE, respectively. These methods will be used for comparison in the numerical experiments of \Cref{secCompare}.

\subsubsection{Collocation}\label{subsecCol}

We assume that the strong form of the residual can be represented in the form 
\begin{equation}
\begin{split}
L_u(x)=&0 \, \hspace{0.2cm}(x\in \Omega),\\
G_u(x)=&0 \, \hspace{0.2cm}(x\in \partial\Omega).
\end{split}
\end{equation}
The collocation method considers discretisations of the $L^2$ norms of $L_u$ and $G_u$ as the loss function to be minimised, according to an appropriate quadrature rule. Explicitly, we consider the loss
\begin{equation}\label{eqColLoss}
\mathcal{L}_{col}(u):=\frac{1}{K_1}\sum\limits_{i=1}^{K_1} \omega_i|L_u(x_i)|^2+\frac{1}{K_2}\sum\limits_{i=1}^{K_2}\omega_i^b\left|G_u(x_i^b)\right|^2,
\end{equation}
where $(x_i)_{i=1}^{K_1}$ and $(\omega_i)_{i=1}^{K_1}$ are quadrature points in $\Omega$ and quadrature weights, respectively, which may be taken via a Monte Carlo or a deterministic quadrature scheme. Similarly, $(x_i^b)_{i=1}^{K_2}$ and $(\omega_i^b)_{i=1}^{K_2}$ are quadrature points and weights on the boundary.

In PDEs with low regularity, the strong form of the PDE does not hold and minimisers of Eq.  \eqref{eqColLoss} will not accurately represent the PDE. Despite this limitation, the collocation (PINN) method is one of the most attractive methods for regular problems as it is simple to implement using {\it autodiff} algorithms. Furthermore, by using Monte Carlo integration techniques, integrals can be estimated in high dimension without suffering from the curse of dimensionality.

\subsubsection{VPINNs}\label{subsecVPINN} 
VPINNs employ a loss that utilizes the weak formulation of the PDE. They correspond to a Petrov-Galerkin method where the trial space is given by NNs. Given a set of test functions $(v_k)_{k=1}^K$, a candidate solution $u$ and the residual $\mathcal{R}(u)\in V^*$ given in weak form, the loss is defined as 

\begin{equation}\label{eqDefL2Loss}
\mathcal{L}_{\vpinn}(u)=\sum\limits_{k=1}^K |\langle\mathcal{R}(u),v_k\rangle_{V^*\times V}|^2.
\end{equation}

In \cite{kharazmi2019variational}, this method was shown to be advantageous over classical PINNs method, both in terms of accuracy and speed. A particular application within their work, relevant to this manuscript, was to consider ODEs on $[0,1]$ with a NN architecture that consists of a single hidden layer with sine activation function, and test functions $v_k(x)=\sin(k \pi x)$. For this implementation, the authors were able to perform an exact quadrature to evaluate $\langle\mathcal{R}(u),v_k\rangle_{V^*\times V}$, which was employed in their loss function. In other implementations within their article, Legendre polynomials are considered as test functions. Whilst not directly commented within their work, in their implementation with sine test functions, the norm may be interpreted as a discretisation of the $L^2$-norm of the strong form of the residual. As they consider the test functions $(v_k)_{k=1}^K$ form to be a subset of an orthonormal basis of $L^2$, if there exists a strong form residual $L_u\in L^2$ such that 
$$\langle\mathcal{R}(u),v\rangle_{V^*\times V}=\langle L_u,v\rangle_{L^2}$$
for all $v\in H^1_0$, we observe that 
\begin{equation}
\sum\limits_{k=1}^K |\langle\mathcal{R}(u),v_k\rangle_{V^*\times V}|^2=\sum\limits_{k=1}^K\langle L_u,v_k\rangle_{L^2}^2\approx ||L_u||_{L^2}^2.
\end{equation}
In particular, for sufficiently regular problems, this implies that $\mathcal{L}_{\vpinn}$ and $\mathcal{L}_{col}$ each correspond to distinct discretisations of the same loss, i.e., the $L^2$-norm of the strong-form residual. The significant difference, however, is that the discretisation \eqref{eqDefL2Loss} is always well defined, even if the residual cannot be represented by an $L^2$ function, and we will observe the consequences of this distinction in \Cref{ModelProblem3}, employing sine-based test functions, as in \cite{kharazmi2019variational}.

\section{DFR Method} \label{secTheory}

For exposition purposes we will only list the key results necessary for defining the problem, with the details deferred to appendices. 
\subsection{Dual Norms and Residual Minimisation}\label{secDualNorms}

Let us consider a PDE of the form \eqref{eqWeakFormSomething}, described by a weak-form residual operator $\mathcal{R}:U\to V^*$, which is linear and inhomogeneous, so that it may be expressed as in \eqref{eqLinearPDE}. We assume $V$ to be a Hilbert space, and take $f\in V^*$, and $b:U\times V\to\mathbb{R}$ to be a bilinear form satsifying the continuity condition 
\begin{equation}\label{eqContLM}
|b(u,v)|\leq M||u||_U||v||_V
\end{equation}
and inf-sup stability condition 
\begin{equation}\label{eqBDDBelow}
\inf\limits_{u\in U\setminus \{0\}}\sup\limits_{v\in V\setminus \{0\}}\frac{|b(u,v)|}{||u||_U||v||_V}\geq \gamma,
\end{equation}
where $0<\gamma<M$. 

If we consider the map $B:U\to V^*$ given by $B:u\mapsto b(u,\cdot)$, then the conditions \eqref{eqContLM} and \eqref{eqBDDBelow} ensure that $B$ is a boundedly invertible map onto its image $B(U)$, and, in particular, if $B$ is surjective, then there exists a unique solution to \eqref{eqWeakFormSomething} for all $f\in V^*$ \cite[Section 6.12]{ciarlet2013linear}. Furthermore, if $f$ is in the range of $B$, given the exact solution $u^*\in U$ to \eqref{eqWeakFormSomething}, and a candidate solution $u\in U$, we may estimate the error using the dual norm of the residual via the inequalities
\begin{equation}\label{eqUpLowerBd}
\frac{1}{M}||\mathcal{R}(u)||_{V^*}\leq ||u-u^*||_U\leq \frac{1}{\gamma}||\mathcal{R}(u)||_{V^*}.
\end{equation}
Both inequalities are found by noting that since $\mathcal{R}(u^*)=0$, then $$\langle\mathcal{R}(u),v\rangle_{V^*\times V}=\langle\mathcal{R}(u),v
\rangle_{V^*\times V}-\langle\mathcal{R}(u^*),v\rangle_{V^*\times V}=b(u-u^*,v)$$ for any test function $v\in V$. Correspondingly, the lower bound is a direct consequence of \eqref{eqContLM}, as 
\begin{equation}
\sup\limits_{v\in V\setminus\{0\}}\frac{|\langle\mathcal{R}(u),v\rangle_{V^*\times V}|}{||v||_V}=\sup\limits_{v\in V\setminus \{0\}}\frac{|b(u-u^*,v)|}{||v||_V}\leq M||u-u^*||_{U}.
\end{equation}
Similarly, the upper bound is a direct consequence of \eqref{eqBDDBelow}, as 
\begin{equation}
\begin{split}
\sup\limits_{v\in V\setminus \{0\}} \frac{|\langle\mathcal{R}(u),v\rangle_{V^*\times V}|}{||v||_V}=\sup\limits_{v\in V\setminus \{0\}}\frac{|b(u-u^*,v)|}{||v||_V}\geq \gamma||u-u^*||. 
\end{split}
\end{equation} 
This makes $||\mathcal{R}(u)||_{V^*}$ a natural choice of norm to be utilised as a loss function for training NNs. Generally, however, it is non-trivial to evaluate $||\cdot||_{V^*}$, which is defined as in \eqref{eqDualNorm}, and, unlike classical Sobolev-type norms, generally cannot be expressed as a single integral of the function and its derivatives. 


\subsubsection{Nonlinear equations}\label{remarkNonlinearity}

Whilst our previous discussion applies only to linear equations, via a linearisation argument it is possible to obtain a local version of \eqref{eqUpLowerBd} for nonlinear PDE. We consider an abstract PDE given of the following form. Considering $\mathcal{R}:U\to V^*$ to be nonlinear, we obtain the following result. 

\begin{proposition}\label{propNonlinearEstimate}
Assume that there exists a (possibly non-unique) solution $u^*\in U$ of ${\mathcal{R}(u^*)=0}$ such that:
\begin{enumerate}[(i)]
\item There exists $r_0>0$ such that $\mathcal{R}$ is Gateaux differentiable for all $u\in U$ with $||u-u^*||_{U}<r_0$.
\item The directional derivative $\delta_w\mathcal{R}(u^*)$ is bounded below, so that there exists $\gamma>0$ such that for all $w\in U$
$$||\delta_w\mathcal{R}(u^*)||_{V^*}\geq \gamma||w||_U.$$
\item There exists some $r_0>0$ such that the Gateaux derivative of $\mathcal{R}$ is Lipschitz on the ball $B_{r_0}(u^*)$. 
\end{enumerate}
Then for every $0<\epsilon<1$ there exists $\delta>0$ such that if $||u-u^*||_U<\delta$, 
\begin{equation}
\frac{1-\epsilon}{M}||\mathcal{R}(u)||_{V^*}\leq ||u-u^*||_U\leq \frac{1+\epsilon}{\gamma}||\mathcal{R}(u)||_{V^*}. 
\end{equation}
\end{proposition}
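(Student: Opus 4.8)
The plan is to reduce the nonlinear estimate to the linear one \eqref{eqUpLowerBd} applied to the Gateaux derivative $L := \delta_\bullet\mathcal{R}(u^*)$, which is a bounded linear map $U\to V^*$ (bounded above with constant $M$, say, though this needs a comment — see below), and bounded below by $\gamma$ thanks to hypothesis (ii). The key device is the fundamental theorem of calculus along the segment from $u^*$ to $u$: writing $w = u-u^*$, one has
\begin{equation}
\mathcal{R}(u)=\mathcal{R}(u^*)+\int_0^1 \delta_w\mathcal{R}(u^*+tw)\,dt = \int_0^1 \delta_w\mathcal{R}(u^*+tw)\,dt,
\end{equation}
using $\mathcal{R}(u^*)=0$ and Gateaux differentiability on $B_{r_0}(u^*)$ from (i). Adding and subtracting $\delta_w\mathcal{R}(u^*)$ gives
\begin{equation}
\mathcal{R}(u)=\delta_w\mathcal{R}(u^*)+\int_0^1\bigl(\delta_w\mathcal{R}(u^*+tw)-\delta_w\mathcal{R}(u^*)\bigr)\,dt =: Lw + E(w),
\end{equation}
and by the Lipschitz hypothesis (iii), with Lipschitz constant $C_L$ for the derivative on $B_{r_0}(u^*)$, the error term is controlled by $\|E(w)\|_{V^*}\le \int_0^1 C_L\,\|tw\|_U\,\|w\|_U\,dt = \tfrac{1}{2}C_L\|w\|_U^2$, which is quadratically small.

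From here the two inequalities follow by the triangle inequality. For the lower bound on $\|u-u^*\|_U$: $\|\mathcal{R}(u)\|_{V^*}\le \|Lw\|_{V^*}+\|E(w)\|_{V^*}\le M\|w\|_U + \tfrac12 C_L\|w\|_U^2 = M\|w\|_U\bigl(1+\tfrac{C_L}{2M}\|w\|_U\bigr)$, so choosing $\delta$ small enough that $\tfrac{C_L}{2M}\delta \le \tfrac{\epsilon}{1-\epsilon}$ (equivalently $1+\tfrac{C_L}{2M}\delta \le \tfrac{1}{1-\epsilon}$) yields $\tfrac{1-\epsilon}{M}\|\mathcal{R}(u)\|_{V^*}\le \|w\|_U$. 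For the upper bound: $\|\mathcal{R}(u)\|_{V^*}\ge \|Lw\|_{V^*}-\|E(w)\|_{V^*}\ge \gamma\|w\|_U - \tfrac12 C_L\|w\|_U^2 = \gamma\|w\|_U\bigl(1-\tfrac{C_L}{2\gamma}\|w\|_U\bigr)$, so shrinking $\delta$ further so that $\tfrac{C_L}{2\gamma}\delta\le \tfrac{\epsilon}{1+\epsilon}$ (equivalently $(1-\tfrac{C_L}{2\gamma}\delta)^{-1}\le 1+\epsilon$) gives $\|w\|_U\le \tfrac{1+\epsilon}{\gamma}\|\mathcal{R}(u)\|_{V^*}$. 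Taking the minimum of the two resulting thresholds (and of $r_0$) as the final $\delta$ completes the argument.

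The main obstacle — and the point needing the most care — is justifying the manipulations that treat $\delta_\bullet\mathcal{R}(u^*)$ as a genuine bounded linear operator and that make sense of the integral $\int_0^1 \delta_w\mathcal{R}(u^*+tw)\,dt$ as a $V^*$-valued (Bochner) integral: Gateaux differentiability alone gives directional derivatives without a priori linearity or boundedness in $w$, and the fundamental theorem of calculus for the map $t\mapsto\mathcal{R}(u^*+tw)$ requires this map to be, say, $C^1$ as a $V^*$-valued function of $t$, which is exactly what the Lipschitz-continuity hypothesis (iii) on the Gateaux derivative is there to supply (it forces $t\mapsto\delta_w\mathcal{R}(u^*+tw)$ to be continuous, hence the composition is $C^1$). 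One should also note that the constant $M$ appearing in the statement is implicitly the upper bound $\|L\|_{U\to V^*}$, finite because a Lipschitz Gateaux derivative on a ball is locally bounded; since the statement only claims a local estimate near $u^*$, it suffices to use $M := \|\delta_\bullet\mathcal{R}(u^*)\|$. Everything else is the routine two-line triangle-inequality bookkeeping sketched above.
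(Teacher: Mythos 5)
Your proposal is correct and follows essentially the same route as the paper's proof in \Cref{appNonlinear}: linearise via Taylor's theorem with integral remainder, bound the remainder quadratically using the Lipschitz hypothesis on the Gateaux derivative, and conclude with the triangle and reverse-triangle inequalities together with a suitable choice of $\delta$. The only differences are cosmetic (you retain the factor $t$ inside the remainder integral and so get $\tfrac12 C_L\|w\|_U^2$ rather than the paper's $L\|u-u^*\|_U^2$, and you add a welcome remark clarifying that $M$ should be read as the operator norm of $\delta_\bullet\mathcal{R}(u^*)$, a point the paper leaves implicit).
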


We defer the precise definitions of the objects in the proposition and its proof to \Cref{appNonlinear}. If $\mathcal{R}$ corresponds to a linear inhomogeneous PDE and is thus equal to its linearisation, $\delta$ can be taken as $+\infty$ and the estimate is global. The significance of this result is that if we have a PDE described by a sufficiently regular $\mathcal{R}$ and a candidate solution sufficiently close to an exact solution, which need not be unique, they will satisfy estimates analogous to \eqref{eqUpLowerBd}. In particular, if $||\mathcal{R}(u)||_{V^*}$ is used as a loss function, and the candidate solution is close enough to the exact one (which is to be expected at the end of trianing), we should observe strong a correlation between the loss and the $H^1$-error. We will numerically illustrate this in \Cref{ModelProblemNonlinear}. 

\subsection{Evaluation of the $H^{-1}$ norm with the DFR method}

This work takes advantage of a Parseval-type inequality to evaluate the dual norm of elements of the dual space in terms of an orthonormal basis of the original Hilbert space.

\begin{proposition}
Let $V$ be a real separable Hilbert space with inner product ${\langle\cdot,\cdot\rangle_V:V\times V\to\mathbb{R}}$, and $(\varphi_k)_{k=1}^\infty$ a countable, orthogonal basis of $V$, with $||\varphi_k||_{V}^2=:\lambda_k$. Then for all $f\in V^*$, 
\begin{equation}\label{eqDualNormSeries}
||f||_{V^*}:=\max\limits_{v\in V\setminus \{0\}}\frac{|f(v)|}{||v||_V}=\left(\sum\limits_{k=1}^\infty \lambda_k^{-1}f(\varphi_k)^2\right)^\frac{1}{2}.
\end{equation}

\end{proposition}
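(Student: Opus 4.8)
The plan is to identify $V^*$ isometrically with $V$ via the Riesz representation theorem and then express everything in terms of the given orthogonal basis. First I would fix $f \in V^*$ and invoke Riesz to obtain a unique $w_f \in V$ with $f(v) = \langle w_f, v\rangle_V$ for all $v \in V$, and recall that this identification is isometric, so $\|f\|_{V^*} = \|w_f\|_V$; this already turns the left-hand side into a norm computation inside $V$. Next I would use that $(\varphi_k)_{k=1}^\infty$ is an orthogonal basis with $\|\varphi_k\|_V^2 = \lambda_k$, so that $(\varphi_k/\sqrt{\lambda_k})$ is an \emph{orthonormal} basis; by Parseval applied to $w_f$ in this orthonormal basis,
\begin{equation*}
\|w_f\|_V^2 = \sum_{k=1}^\infty \left\langle w_f, \frac{\varphi_k}{\sqrt{\lambda_k}}\right\rangle_V^2 = \sum_{k=1}^\infty \lambda_k^{-1}\langle w_f,\varphi_k\rangle_V^2 = \sum_{k=1}^\infty \lambda_k^{-1} f(\varphi_k)^2,
\end{equation*}
where the last equality just uses $f(\varphi_k) = \langle w_f,\varphi_k\rangle_V$. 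Taking square roots gives \eqref{eqDualNormSeries}.

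For the claim that the supremum in the definition of $\|f\|_{V^*}$ is actually attained (justifying the $\max$ rather than $\sup$ in \eqref{eqDualNormSeries}), I would note that if $f = 0$ the statement is trivial, and otherwise $v = w_f$ realizes the bound: by Cauchy--Schwarz $|f(v)| = |\langle w_f, v\rangle_V| \le \|w_f\|_V \|v\|_V$ for every $v$, with equality when $v = w_f \neq 0$, so $\max_{v \neq 0} |f(v)|/\|v\|_V = \|w_f\|_V$. This also re-derives the isometry of the Riesz map, so those two points can be folded into one argument.

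There is essentially no hard obstacle here; the only points requiring a little care are purely bookkeeping: confirming that convergence of the series is automatic (it equals $\|w_f\|_V^2 < \infty$, so no separate convergence hypothesis is needed, and separability guarantees the basis is genuinely countable so the sum is over $k \in \mathbb{N}$), and being careful that ``orthogonal basis'' is used in the Hilbert-space sense of a complete orthogonal system, so that Parseval's identity (not merely Bessel's inequality) applies after normalization. If the authors prefer a more self-contained route avoiding explicit mention of Riesz, one can instead argue directly: expand any $v \in V$ as $v = \sum_k c_k \varphi_k$, compute $\|v\|_V^2 = \sum_k \lambda_k c_k^2$ and $f(v) = \sum_k c_k f(\varphi_k)$, and maximize the Rayleigh-type quotient $\big(\sum_k c_k f(\varphi_k)\big)^2 / \big(\sum_k \lambda_k c_k^2\big)$ over sequences $(c_k)$ by Cauchy--Schwarz in the weighted $\ell^2$ space, with equality for $c_k \propto \lambda_k^{-1} f(\varphi_k)$; this gives the same value $\sum_k \lambda_k^{-1} f(\varphi_k)^2$ and simultaneously exhibits the maximizer. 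Either way the proof is short; I would present the Riesz/Parseval version as the cleanest.
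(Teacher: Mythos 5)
Your proof is correct and follows essentially the same route as the paper's: apply the Riesz representation theorem to obtain the isometric identification $f \mapsto u_f$, then use Parseval's identity in the orthogonal basis $(\varphi_k)$ to express $\|u_f\|_V^2$ as the stated series. The additional remarks on attainment of the maximum and the alternative Rayleigh-quotient argument are fine but not needed.
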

\begin{proof}
By the Riesz Representation Theorem, there exists a unique solution $u_f\in V$ of
\begin{equation}\label{eqRRTint}
\langle u_f,v\rangle_{V}=f(v)
\end{equation}
for all $v\in V$, that satisfies $||u_f||_{V}=||f||_{V^*}$, and thus the mapping $f\mapsto u_f$ is an isometry. As $(\varphi_k)_{k=1}^\infty$ is an orthogonal basis of $V$, we then have via the generalised Parseval identity that 
\begin{equation}
||f||_{V^*}=||u_f||_{V}=\left(\sum\limits_{k=1}^\infty \frac{\langle u_f,\varphi_k\rangle_{V}^2}{||\varphi_k||_{V}^2}\right)^\frac{1}{2}=\left(\sum\limits_{k=1}^\infty \lambda_k^{-1}f(\varphi_k)^2\right)^\frac{1}{2}.
\end{equation}
\end{proof}

Our approach consists of approximating the dual norm of elements $f\in V^*$ using a truncated version of this series expression. To do so, we need an appropriate orthogonal basis $(\varphi_k)_{k=1}^\infty$ of $V$. 

In this work, we restrict ourselves to problems where the space of test functions is $V=\{u\in H^1(\Omega): u|_{\Gamma_D}=0\}$, where $\Gamma_D\subset\partial\Omega$ is the region of the boundary corresponding to a Dirichlet boundary condition of the PDE. For this case, we take our orthogonal basis of $V$ to be the (weak) solutions of 

\begin{equation}
\begin{split}
\lambda_k\int_\Omega v(x)\varphi_k(x)=&\int_\Omega \nabla \varphi_k(x)\cdot \nabla v(x)+\varphi_k(x)v(x)\,dx \hspace{0.5cm}(\forall v\in V)\\
||\varphi_k||_{L^2}=&1.
\end{split}
\end{equation}
In strong form, we may write the PDE as 
\begin{equation}
(1-\Delta)\varphi_k(x)=\lambda_k\varphi_k(x).
\end{equation}
That is, $\varphi_k$ are eigenvectors of $1-\Delta$ with homogeneous Dirichlet condition on $\Gamma_D$ and homogeneous Neumann condition on $\partial\Omega\setminus\Gamma_D=\Gamma_N$, with corresponding eigenvalues $\lambda_k$, and normalised to have unit norm in $L^2(\Omega)$. The key properties of $\varphi_k$, with proofs deferred to \Cref{appLaplacian} and derived from classical spectral theory, are: 
\begin{enumerate}
\item $(\varphi_k)_{k=1}^\infty$ forms an orthogonal basis of $V$, and an orthonormal basis of $L^2(\Omega)$.
\item $||\varphi_k||_{H^1}^2=\lambda_k$, where $\lambda_k$ is the eigenvalue of $1-\Delta$ corresponding to $\varphi_k$. 
\item $\lambda_k\geq 1$ for all $k$ and, under a suitable reordering, $\lambda_k$ is non-decreasing and unbounded.
\end{enumerate}

In this case, we see that $f$ and $u_f$ are related via $(1-\Delta)u_f=f$, and we may interpret \eqref{eqDualNormSeries} as stating that $||f||_{V^*}=||(1-\Delta)^{-1}f||_{H^1}$, where our series expression allows us to evaluate the latter in a straightforward manner.

In general geometries, it is non-trivial to identify the eigenvectors and eigenvalues of $1-\Delta$ on a domain, and thus for the sake of this work we consider only simple geometries described by $n$-dimensional cubes, $\Omega=(0,\pi)^n$, with $\Gamma_D$ given by a union of any number of the $2n$ faces of $\partial\Omega$. In this case, the eigenvectors of $1-\Delta$ are simply products of the eigenvectors of $1-\frac{d}{dx_i^2}$ in each coordinate direction with the appropriate boundary conditions, and thus may be written explicitly. Moreso, as these eigenvectors are all described as sines and cosines of varying frequencies, we will be able to take advantage of the Discrete Sine/Cosine Transforms to efficiently evaluate the residual at the basis functions. 

Considering the one-dimensional problem, as $\partial[0,\pi]$ consists of only two points, there are only four options for $\Gamma_D\subset \partial[0,\pi]$. \Cref{tableBases} lists the four possible boundary conditions, along with corresponding eigenvalues $\lambda_k$ and eigenvectors $\varphi_k$.
\begin{table}[h!]
\begin{center}
\begin{tabular}{|m{1cm} |m{2cm}| m{4.5cm}| m{1.5cm}|m{1.5cm}|@{}m{0pt}@{}}
\hline
$\Gamma_D$& $\lambda_k$ & $ \varphi_k$ & $\langle \cdot,\varphi_k\rangle_{L^2}$ &$\langle \cdot,\varphi_k'\rangle_{L^2}$&\\[10pt]
\hline
$\{0,\pi\}$&  $1+k^2$ & $\sqrt{\frac{2}{\pi}}\sin(kx)$ & DST-II & DCT-II&\\[10pt]
\hline
$\{0\}$ & $1+\left(k-\frac{1}{2}\right)^2$ & $\sqrt{\frac{2}{\pi}}\sin\left(\left(k-\frac{1}{2}\right)x\right)$ & DST-IV & DCT-IV&\\[10pt]
\hline
$\{\pi\}$ & $1+\left(k-\frac{1}{2}\right)^2$ & $\sqrt{\frac{2}{\pi}}\cos\left(\left(k-\frac{1}{2}\right)x\right)$& DCT-IV & DST-IV&\\[10pt]
\hline
$\emptyset$ & $1+(1-k)^2$ & $\sqrt{\frac{2}{\pi}}\cos\left((k-1)x\right)-\delta_{k1}\pi^{-\frac{1}{2}}$ &DCT-II & DST-II&\\[10pt]
\hline
\end{tabular}
\end{center}
\caption{Basis functions and eigenvalues for $H^1(0,\pi)$ with various boundary conditions, along with the relevant transforms for evaluating integrals against the basis functions and their derivatives.}\label{tableBases}
\end{table}

To evaluate PDE residuals acting on basis functions of the above forms, we will need to numerically evaluate integrals involving basis functions and their derivatives. As these are global basis functions, a naive calculation of the integrals of each basis function could prove prohibitively expensive. To remedy this, we consider the Discrete Sine/Cosine transforms as a means of quadrature, which reduce the number of calculations required to use an $N$ point midpoint rule for $N$ basis functions from $O(N^2)$ to $O(N\ln N)$. \Cref{tableBases} contains in its fourth and fifth columns the quadrature scheme for evaluating integrals against the basis functions and their derivatives, respectively. We defer detailed discussion of the transforms to \Cref{appFFT}. Their key use is that they are analogous to the Fast Fourier Transform, where the boundary conditions are no longer periodic. In fact, their efficient calculation arises from their representation as special cases of the Discrete Fourier Transform under particular symmetries. 

These basis functions are easily adapted to arbitrary intervals $(a,b)$ by a rescaling argument. We may define corresponding orthonormal basis functions $\tilde{\varphi}_k\in H^1(a,b)$ by 
\begin{equation}
\tilde{\varphi}_k(x)=\varphi_k\left(\frac{\pi(x-a)}{b-a}\right),\end{equation}
which are eigenvectors of $1-\Delta$ on $(a,b)$ with corresponding eigenvalues
\begin{equation}
\tilde{\lambda}_k=\frac{\pi^2}{(b-a)^2}(\lambda_k-1)+1.
\end{equation}

Similarly, by considering tensor products of these 1D basis functions, we can obtain an orthonormal basis for $H^1(\Omega)$ with the appropriate boundary conditions when $\Omega=\prod\limits_{i=1}^d(a_i,b_i)$.

\subsection{Definition of the discretised loss}\label{subsecDefinitionLoss}

Our aim is to define a computable, discretised loss, $\mathcal{L}_{V^*}$, such that for a candidate solution $u$, we have that 
\begin{equation}\label{eqLossApprox}
\mathcal{L}_{V^*}(u)\approx ||\mathcal{R}(u)||^2_{V^*}.
\end{equation}
We will do this by employing a truncated series expansion of \eqref{eqDualNormSeries}, and taking the sine/cosine based basis functions outlined in \Cref{tableBases}, with quadrature corresponding to the DCT/DST, as outlined in \Cref{appFFT}. Before defining the loss in the general case, we outline its definition in a simple one-dimensional example for clearer exposition. 

\subsubsection{One-dimensional example}\label{subsubsecExample}
Let $f\in L^2(0,\pi)$ and $g\in\mathbb{R}$. Let us consider the ODE, in weak form, to be: find $u \in H^1(0,\pi)$ with $u(0)=0$, such that 
\begin{equation}
\langle\mathcal{R}(u),v\rangle_{V^*\times V}=\int_0^\pi \sigma(x)u'(x)v'(x)+f(x)v(x)\,dx -gv(\pi)=0
\end{equation}
for all $v\in H^1(0,\pi)$ with $v(0)=0$. Consulting \Cref{tableBases}, we see that our relevant basis functions for $V$ corresponding to our homogeneous Dirichlet boundary condition at $0$ are given by  ${\varphi_k(x)=\sqrt{\frac{2}{\pi}}\sin\left(\left(k-\frac{1}{2}\right)x\right)}$ with corresponding eigenvalues $\lambda_k=1+\left(k-\frac{1}{2}\right)^2$. The derivatives of our basis functions are readily evaluated as $\varphi_k'(x)=\left(k-\frac{1}{2}\right)\sqrt{\frac{2}{\pi}}\cos\left(\left(k-\frac{1}{2}\right)x\right)$. 

First, we choose a truncation frequency, $N>0$. For $1\leq k\leq N-1$, we now aim to approximate $\langle\mathcal{R},u,\varphi_k\rangle_{V^*\times V}$. Recalling the approximations in \eqref{eqFFTs}, we may then approximate the integrals appearing in the residual as 
\begin{equation}
\begin{split}
\int_0^\pi f(x)\varphi_k(x)\,dx =&  \int_0^\pi f(x)\sqrt{\frac{2}{\pi}}\sin\left(\left(k-\frac{1}{2}\right)x\right)\,dx\\
\approx & \mathcal{S}_{N,k}^{II}(f),\\
\int_0^\pi \sigma(x)u'(x)\varphi_k'(x)\,dx = & \int_0^\pi\left(k-\frac{1}{2}\right)\sqrt{\frac{2}{\pi}}\cos\left(\left(k-\frac{1}{2}\right)x\right)\sigma(x)u'(x)\,dx\\
\approx & \left(k-\frac{1}{2}\right)\mathcal{C}_{N,k}^{II}(\sigma u'),
\end{split}
\end{equation}
where {\it autodiff} is employed to evaluate $u'$ for a candidate solution $u$ described by a NN, and $\mathcal{C}^{II}_{N,k},\mathcal{S}_{N,k}^{II}$ are the type-II DCT and DST, respectively, as described in \Cref{appFFT}. The boundary term may be evaluated exactly as 
\begin{equation}
g\varphi_k(\pi)=g\sqrt{\frac{2}{\pi}}\sin\left(\left(k-\frac{1}{2}\right)\pi\right)=g\sqrt{\frac{2}{\pi}}(-1)^{k+1}.
\end{equation}
Thus, we define the discretised transform of the residual, $\hat{\mathcal{R}}(u)(k)$ for $1\leq k \leq N-1$ as 
\begin{equation}
\hat{\mathcal{R}}(u)(k):=\left(k-\frac{1}{2}\right)\mathcal{C}_{N,k}^{II}(\sigma u')+\mathcal{S}_{N,k}^{II}(f)-g\sqrt{\frac{2}{\pi}}(-1)^{k+1}.
\end{equation}
Finally, our discretised loss, denoted $\mathcal{L}_{V^*}$, is defined to be 
\begin{equation}
\mathcal{L}_{V^*}(u):=\sum\limits_{k=1}^{N-1}\frac{|\hat{\mathcal{R}}(u)(k)|^2}{1+\left(k-\frac{1}{2}\right)^2}.
\end{equation}

\subsubsection{The general case}
\label{subsubsecLoss}

Let $\Omega=(0,\pi)^d$, and take $\Gamma_D\subset \partial\Omega$ to be a union of faces of the rectangular domain $\Omega$. Take $V=\{u\in H^1(\Omega):u|_{\Gamma_D}=0\}$. For a candidate solution $u\in H^1(\Omega)$, consider the residual 
\begin{equation}
\langle\mathcal{R}(u),v\rangle_{V^*\times V}=\int_\Omega F^1_u(x)\cdot \nabla v(x)+F^2_u(x)v(x)\,dx -\int_{\Gamma_N}G_u(x)v(x)\,dx,
\end{equation}
where the functions $F^1_u,F^2_u,G_u$ may be functions of $x,u$ and derivatives of $u$. 

The loss is thus defined according to the following process:

\begin{enumerate}
\item Choose a cutoff frequency $N>0$. 
\item Identify the correct basis functions $\varphi_{k_1,k_2,...,k_d}(x_1,x_2,...,x_d)=\prod\limits_{i=1}^d\varphi^i_{k_i}(x_i)$, where $\varphi^i_{k_i}$ are the 1D basis functions described in \Cref{tableBases} according to the boundary conditions on each face. 
\item Identify the correct eigenvalues $\lambda_{k_1,k_2,...,k_d}=1-d+\sum\limits_{i=1}^d \lambda_{k_i}$ for each basis function according to \Cref{tableBases}. 
\item Express the residual operator $\mathcal{R}(u)$ in weak form, evaluate integrals across the interior and faces by performing the appropriate DCT/DST in each coordinate direction, according to the fourth and fifth columns of \Cref{tableBases} for $k_1,...,k_d=1,...,N-1$ to give $\langle\mathcal{R}(u),\varphi_{k_1,...,k_d}\rangle_{V^*\times V}\approx \hat{\mathcal{R}}(u)(k_1,...,k_d)$. 
\item Evaluate the loss as 
$$\mathcal{L}_{V^*}(u):=\sum\limits_{i=1}^d\sum\limits_{k_i=1}^{N-1}\frac{|\hat{\mathcal{R}}(u)(k_1,k_2,...,k_d)|^2}{\lambda_{k_1,k_2,...,k_d}}.$$
\end{enumerate}

\subsection{Potential limitations}
We have two sources of error in the approximation \eqref{eqLossApprox}. First, the error arising from quadrature, according to our mid-point rule for integration and evaluation of $\hat{\mathcal{R}}(u)(k)$. Second, errors arise from the truncation of the infinite series in \eqref{eqDualNormSeries}. In contrast, the quadrature rule employed by the DCT/DST is exact when $\mathcal{R}(u)$, represented as a function in $H^1$ via the Riesz Representation Theorem belongs to the span of $\left((\varphi_{k_1k_2,...,k_d})_{k_i=1}^N\right)_{i=1}^d$, where $N$ is the cutoff frequency employed. That is, our discretisation error corresponds to high-frequencies of the {\it residual}--not to be confused with high frequencies of the solution. In principle, high-frequency components in the residual may arise as a consequence of the following:
\begin{enumerate}
\item The residual itself contains high-frequency modes due to the presence of terms with low-regularity.
\item The function space used has low regularity, such as NNs with a $ReLU$ activation function. 
\item The function space is flexible enough and the number of integration points is low enough that overfitting occurs during minimisation, which would introduce high-frequency modes, unseen by the loss. 
\end{enumerate}

By using a sufficiently high cutoff frequency $N>0$, errors corresponding to (1) should be negligible. To avoid the possibility of (2), we employ smooth activation functions. With regards to (3), it is known that extreme quadrature issues can arise when training NNs to solve PDEs \cite{rivera2022quadrature}, which may be interpreted as a form of overfitting. We do not focus on this issue within this work and we use a validation and a training set to verify if overfitting occurs.

\section{Numerical Experiments}
\subsection{Validation Results}\label{secCompare}

We now consider various linear ODEs to illustrate the differences between different losses for training a neural network when solving linear PDEs. $\mathcal{L}_{V^*},\mathcal{L}_{\vpinn}$, and $\mathcal{L}_{col}$ denote our proposed method, the VPINNs loss, and the collocation method, respectively. The obtained solutions are denoted as $u_{V^*},u_{\vpinn}$, and $u_{col}$, respectively. 

\Cref{figArchitecture} describes the NN architecture of our candidate solutions. It consists of  five hidden layers with $\tanh$ activation function and 25 neurons per layer. We only consider homogeneous Dirichlet boundary conditions, which are implemented according to \eqref{eqArchit} by taking $\phi_1(x)=x$ when $\Gamma_D=\{0\}$, and $\phi_1(x)=x(\pi-x)$ when $\Gamma_D=\{0,\pi\}$. For consistency between experiments, each candidate solution is initialised with the same weights and biases. 

Our implementation uses {\it Tensorflow 2.8}. We use Adam as our optimiser with initial learning rate $10^{-2}$, and an adaptive learning rate, as defined in \cite{uriarte2022finite}, and implemented via a {\it Callback}. The adaptive learning rate allows the optimiser to select the ``correct" learning rate according to the decay of the loss, and rejects iteration steps which lead to an increase in the loss. This choice accelerates convergence, and allows a fairer comparison between the three methods considered, as otherwise the convergence may be highly dependent on the selected learning rate. 

In each case, we minimise the loss using $200$ points, which in the Fourier-based losses corresponds to employing the first 200 basis functions in the truncated series expansions \eqref{eqDualNormSeries} and \eqref{eqDefL2Loss}, and 200 equispaced integration points in the collocation method with a mid-point integration rule. We also measure the loss on a validation set of $274$ points so that we may see if overfitting takes place, which does not factor into the updating of the NN weights, and is only used as a metric for comparison after training. In the Fourier-based losses, this also corresponds to a total of $274$ frequencies used to evaluate the validation loss. 

\subsubsection{Losses implemented}

For comparison, we implement the collocation based loss as described in \Cref{subsecCol}, and the VPINNs loss as described in \Cref{subsecVPINN}. For the latter, we consider an implementation that is highly comparable to the loss $\mathcal{L}_{V^*}$ that we propose. Explicitly, we take 

\begin{equation}\label{eqL2Used}
\mathcal{L}_{\vpinn}(u):=\sum\limits_{k=1}^N|\hat{\mathcal{R}}(u)(k)|^2.
\end{equation}
where $\hat{\mathcal{R}}(u)(k)$ is defined in \Cref{subsubsecLoss}, and we evaluate this using DST/DCT. Thus, we have an application of VPINNs that allows a direct comparison between using an $L^2$-based norm and an $H^{-1}$-based norm for evaluating the PDE residual in weak form. When $V=H^1_0(0,\pi),$ this is consistent with VPINNs as considered in \cite[Section 4.1]{kharazmi2019variational}, which used sine-based test functions albeit with a different quadrature rule. The only difference between \eqref{eqL2Used} and $\mathcal{L}_{V^*}$ is the weighting factor $\lambda_k^{-1}$ imposed in the summation. As discussed in \Cref{subsecVPINN}, this leads to the interpretion of $\mathcal{L}_{\vpinn}$ as a discretisation of the $L^2$ norm of the strong formulation of the residual. As $\mathcal{L}_{col}$ is also a discretisation of the $L^2$ norm of the strong-form residual, we expect implementations that utilise $\mathcal{L}_{col}$ and $\mathcal{L}_{\vpinn}$ to generally behave the same when the strong form is well-defined. The key difference, however, is that $\mathcal{L}_{\vpinn}$ is still well-defined when the strong form of the PDE is not equivalent to the weak form.

\subsubsection{Model Problem 1 - Smooth solution}
\label{ModelProblem1}

We consider the following ODE in variational form: find $u\in H^1_0(0,\pi)$ satisfying
\begin{equation}
\int_0^\pi u'(x) v'(x)-4\sin(2x)v(x)\,dx =0
\end{equation}
for all $v\in H^1_0(0,\pi)$. This has exact solution given by $u^*(x)=\sin(2x)$. 

This example is selected because the weak and strong forms of the PDE are equivalent, and the solution only admits low-frequency modes. For such problems, we expect that $\mathcal{L}_{\vpinn}(u)\approx \mathcal{L}_{col}(u)$, with the approximation being exact in the limit as the number of sampling points tends to infinity. We further expect all three implemented losses to behave similarly since only low-frequency modes play a significant role in this problem.

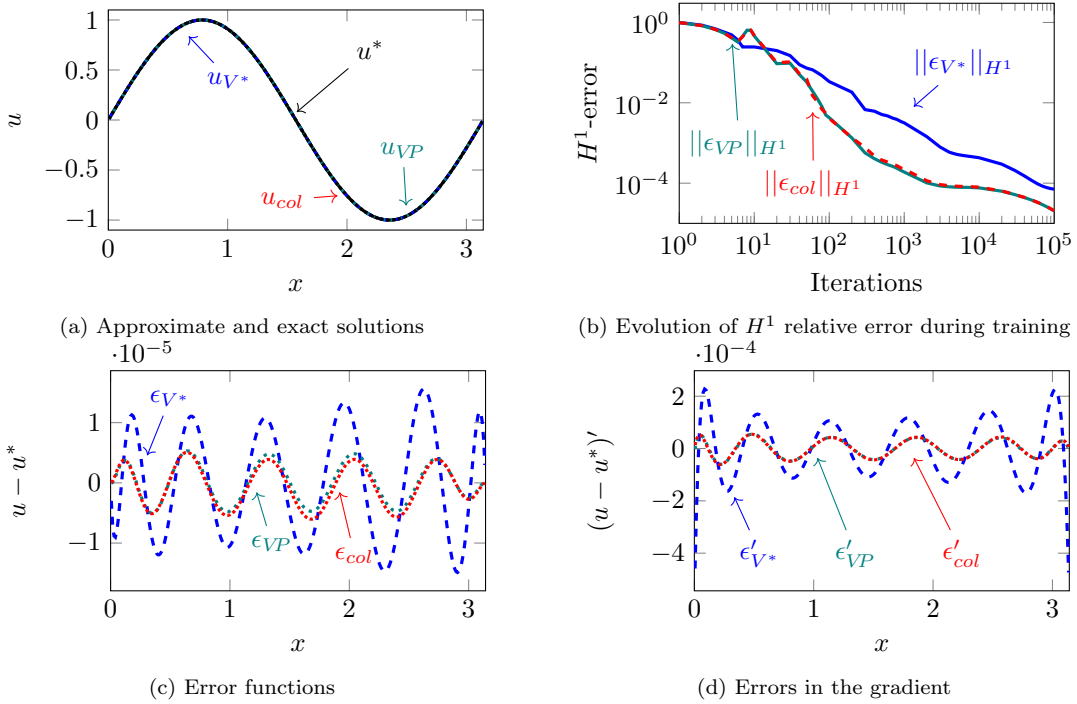
\begin{figure}[H]
\begin{tabular}{c c}
\begin{subfigure}[t]{0.475\textwidth}
\begin{center}
\begin{tikzpicture}
\begin{axis}[
  xlabel=$x$,
  ylabel=$u$,
xmin = 0,
xmax=3.1416,
ymin=-1.1,
ymax=1.1,
height=4.5cm,
width=0.9\textwidth]
\addplot [mark=none,very thick] table [y=b, x=a, col sep = comma]{Graphics/Data/Sine/SolExact.csv};
\node[anchor=west] (source) at (axis cs:2.0,0.7){$u^*$};
       \node (destination) at (axis cs:1.5,0){};
       \draw[->](source)--(destination);
\addplot [mark=none, blue, very thick,dashed ]table [y=b, x=a, col sep = comma]{Graphics/Data/Sine/SolDual.csv};
\node[anchor=west] (source) at (axis cs:0.75,0.4){\color{blue}$u_{V^*}$};
       \node (destination) at (axis cs:0.6,0.93){};
       \draw[blue, ->](source)--(destination);
\addplot [mark=none, teal!100, very thick,dotted] table [y=b, x=a, col sep = comma]{Graphics/Data/Sine/SolL2.csv};
\node[anchor=west] (source) at (axis cs:2.2,-0.3){\color{ teal!100}$u_{\vpinn}$};
       \node (destination) at (axis cs:2.5,-0.95){};
       \draw[teal, ->](source)--(destination);
\addplot [mark=none red, very thick, densely dotted] table [y=b, x=a, col sep = comma,mark=none]{Graphics/Data/Sine/SolCol.csv};
\node[anchor=west] (source) at (axis cs:1.2,-0.8){\color{red}$u_{col}$};
       \node (destination) at (axis cs:2,-0.75){};
       \draw[red, ->](source)--(destination);
\end{axis}
\end{tikzpicture}
\caption{ Approximate and exact solutions}
\end{center}
\end{subfigure}&
\begin{subfigure}[t]{0.475\textwidth}
\begin{center}
\begin{tikzpicture}
\begin{axis}[
  xlabel=Iterations,
  ylabel=$H^1$-error,
xmin =1,
xmax=10e4,
ymin=10e-6,
ymax=3,
height=4.5cm,
width=0.9\textwidth,
xmode=log,
ymode=log]
\addplot [mark=none, blue, very thick ]table [y expr=sqrt(\thisrow{c}),x=a, col sep = comma]{Graphics/Data/Sine/LossH1Dual.csv};
\node[anchor=west] (source) at (axis cs:10e2,0.1){\color{blue}$||\epsilon_{V^*}||_{H^1}$};
       \node (destination) at (axis cs:10e2,5*10e-4){};
       \draw[blue, ->](source)--(destination);
\addplot [mark=none,teal,very thick ]table [y expr=sqrt(\thisrow{c}),x=a, col sep = comma]{Graphics/Data/Sine/LossH1L2.csv};
\node[anchor=west] (source) at (axis cs:1,10e-4){\color{teal}$||\epsilon_{\vpinn}||_{H^1}$};
       \node (destination) at (axis cs:5,5*10e-2){};
       \draw[teal, ->](source)--(destination);
\addplot [mark=none, red, very thick ,dashed]table [y expr=sqrt(\thisrow{c}),x=a, col sep = comma]{Graphics/Data/Sine/LossH1Col.csv};
\node[anchor=west] (source) at (axis cs:10,10e-5){\color{red}$||\epsilon_{col}||_{H^1}$};
       \node (destination) at (axis cs:60,10e-3){};
       \draw[red, ->](source)--(destination);
\end{axis}
\end{tikzpicture}
\caption{Evolution of $H^1$ relative error during training}
\end{center}
\end{subfigure}\\
\begin{subfigure}[t]{0.475\textwidth}
\begin{center}
\begin{tikzpicture}
\begin{axis}[
  xlabel=$x$,
  ylabel=$u-u^*$,
xmin = 0,
xmax=3.1416,
height=4.5cm,
width=0.9\textwidth]
\addplot [mark=none, blue, very thick,dashed ]table [y=b, x=a, col sep = comma]{Graphics/Data/Sine/ErDual.csv};
\node[anchor=west] (source) at (axis cs:0.25,1.4*10e-6){\color{blue}$\epsilon_{V^*}$};
       \node (destination) at (axis cs:0.275,0.3*10e-6){};
       \draw[blue, ->](source)--(destination);
\addplot [mark=none, teal, very thick,dotted] table [y=b, x=a, col sep = comma]{Graphics/Data/Sine/ErL2.csv};
\node[anchor=west] (source) at (axis cs:1.1,-1*10e-6){\color{teal}$\epsilon_{\vpinn}$};
       \node (destination) at (axis cs:1.2,0){};
       \draw[teal, ->](source)--(destination);
\addplot [mark=none, red, very thick, densely dotted] table [y=b, x=a, col sep = comma,mark=none]{Graphics/Data/Sine/ErCol.csv};
\node[anchor=west] (source) at (axis cs:1.8,-1.2*10e-6){\color{red}$\epsilon_{col}$};
       \node (destination) at (axis cs:1.9,0){};
       \draw[red, ->](source)--(destination);
\end{axis}
\end{tikzpicture}
\caption{Error functions}
\end{center}
\end{subfigure}&
\begin{subfigure}[t]{0.475\textwidth}
\begin{center}
\begin{tikzpicture}
\begin{axis}[
  xlabel=$x$,
  ylabel=$(u-u^*)'$,
xmin = 0,
xmax=3.1416,
height=4.5cm,
width=0.9\textwidth]
\addplot [mark=none, blue, very thick,dashed ]table [y=b, x=a, col sep = comma]{Graphics/Data/Sine/GradErDual.csv};
\node[anchor=west] (source) at (axis cs:0.3,-4*10e-5){\color{blue}$\epsilon_{V^*}'$};
       \node (destination) at (axis cs:0.3,-1.5*10e-5){};
       \draw[blue, ->](source)--(destination);
\addplot [mark=none, teal, very thick,dotted] table [y=b, x=a, col sep = comma]{Graphics/Data/Sine/GradErL2.csv};
\node[anchor=west] (source) at (axis cs:1.1,-4*10e-5){\color{teal}$\epsilon_{\vpinn}'$};
       \node (destination) at (axis cs:1,0){};
       \draw[teal, ->](source)--(destination);
\addplot [mark=none, red, very thick, densely dotted] table [y=b, x=a, col sep = comma,mark=none]{Graphics/Data/Sine/GradErCol.csv};
\node[anchor=west] (source) at (axis cs:2,-4*10e-5){\color{red}$\epsilon_{col}'$};
       \node (destination) at (axis cs:1.8,0){};
       \draw[red, ->](source)--(destination);
\end{axis}
\end{tikzpicture}
\caption{Errors in the gradient}
\end{center}
\end{subfigure}
\end{tabular}
\caption{ Model Problem 1. Obtained solutions and relative $H^1$-error evolution for the three methods}
\label{figMP1Sols}
\end{figure}

\begin{figure}
\begin{center}
\makebox[\textwidth][c]{\begin{tabular}{|  c | c | c |}
\hline
& Loss evolution & Loss/error correlation\\
\hline
 $\mathcal{L}_{V^*}$&
\begin{subfigure}[b]{0.45\textwidth}
\begin{center}

\begin{tikzpicture}
\begin{axis}[
  xlabel=Iterations,
  ylabel=Loss,
xmin =10e0,
xmax=10e4,
ymin=10e-9,
ymax=10e0,
height=4cm,
width=\textwidth,
xmode=log,
ymode=log]
\addplot [very thick, blue, mark=none]table [y expr=\thisrow{b},x =a, col sep = comma]{Graphics/Data/Sine/LossH1Dual.csv};
\node[anchor=west] (source) at (axis cs:10e2,0.01){\color{blue}Training};
       \node (destination) at (axis cs:1.2*10e3,4*25*10e-9){};
       \draw[blue, ->](source)--(destination);
\addplot[very thick, red, mark=none,dashed]table [y expr=\thisrow{b},x =a, col sep = comma]{Graphics/Data/Sine/ValH1Dual.csv};
\node[anchor=west] (source) at (axis cs:10e1,0.16){\color{red}Validation};
       \node (destination) at (axis cs:1.2*10e2,1.4*5*10e-6){};
       \draw[red, ->](source)--(destination);

\end{axis}
\end{tikzpicture}
\end{center}
\end{subfigure}&
\begin{subfigure}[b]{0.45\textwidth}
\begin{center}
\begin{tikzpicture}
\begin{axis}[
  xlabel=$\sqrt{\mathcal{L}_{V^*}}$,
  ylabel=$H^1$-error,
xmin =10e-5,
xmax=10e0,
ymin=10e-6,
ymax=10e1,
height=4cm,
width=0.9\textwidth,
xmode=log,
ymode=log,
legend style={at={(0.02,0.98)},anchor=north west}]
\addplot [only marks, blue, mark size=2pt, mark=+]table [y expr=sqrt(\thisrow{c}),x expr=sqrt(\thisrow{b}), col sep = comma]{Graphics/Data/Sine/LossH1Dual.csv};
\addlegendentry{Training}
\addplot[only marks, red, mark size=2pt, mark=x]table [y expr=sqrt(\thisrow{c}), ,x expr=sqrt(\thisrow{b}), col sep = comma]{Graphics/Data/Sine/ValH1Dual.csv};
\addlegendentry{Validation}
\node (s1) at (axis cs:10e-5,10e-5/8){};
\node (s2) at (axis cs:10e2,10e2/8){};
\draw (s1)--(s2);
\end{axis}
\end{tikzpicture}
\end{center}
\end{subfigure}\\
\hline
 $\mathcal{L}_{\vpinn}$&
\begin{subfigure}[b]{0.45\textwidth}
\begin{center}
\begin{tikzpicture}
\begin{axis}[
  xlabel=Iterations,
  ylabel=Loss,
xmin =10e0,
xmax=10e4,
ymin=10e-9,
ymax=10e0,
height=4cm,
width=\textwidth,
xmode=log,
ymode=log]
\addplot [very thick, mark=none,blue]table [y expr=\thisrow{b},x =a, col sep = comma]{Graphics/Data/Sine/LossH1L2.csv};
\node[anchor=west] (source) at (axis cs:10e2,0.01){\color{blue}Training};
       \node (destination) at (axis cs:10e3,4*10e-7){};
       \draw[blue, ->](source)--(destination);
\addplot[very thick, mark=none,red,dashed ]table [y expr=\thisrow{b},x =a, col sep = comma]{Graphics/Data/Sine/ValH1L2.csv};
\node[anchor=west] (source) at (axis cs:10e1,0.16){\color{red}Validation};
       \node (destination) at (axis cs:10e2,36*10e-7){};
       \draw[red, ->](source)--(destination);

\end{axis}
\end{tikzpicture}
\end{center}
\end{subfigure}&
\begin{subfigure}[b]{0.45\textwidth}
\begin{center}
\begin{tikzpicture}
\begin{axis}[
  xlabel=$\sqrt{\mathcal{L}_{\vpinn}}$,
  ylabel=$H^1$-error,
xmin =10e-5,
xmax=10e0,
ymin=10e-6,
ymax=10e1,
height=4cm,
width=0.9\textwidth,
xmode=log,
ymode=log,
legend style={at={(0.02,0.98)},anchor=north west}]
\addplot [only marks, blue, mark size=2pt, mark=+]table [y expr=sqrt(\thisrow{c}),x expr=sqrt(\thisrow{b}), col sep = comma]{Graphics/Data/Sine/LossH1L2.csv};
\addlegendentry{Training}
\addplot[only marks, red, mark size=2pt, mark=x]table [y expr=sqrt(\thisrow{c}), ,x expr=sqrt(\thisrow{b}), col sep = comma]{Graphics/Data/Sine/ValH1L2.csv};
\addlegendentry{Validation}
\node (s1) at (axis cs:10e-5,10e-5/8){};
\node (s2) at (axis cs:10e2,10e2/8){};
\draw (s1)--(s2);
\end{axis}
\end{tikzpicture}
\end{center}
\end{subfigure}\\
\hline
 $\mathcal{L}_{col}$&
\begin{subfigure}[b]{0.45\textwidth}
\begin{center}
\begin{tikzpicture}
\begin{axis}[
  xlabel=Iterations,
  ylabel=Loss,
xmin =10e0,
xmax=10e4,
ymin=10e-9,
ymax=10e0,
height=4cm,
width=\textwidth,
xmode=log,
ymode=log]
\addplot [very thick, blue, mark=none]table [y expr=\thisrow{b},x =a, col sep = comma]{Graphics/Data/Sine/LossH1Col.csv};
\node[anchor=west] (source) at (axis cs:10e2,0.01){\color{blue}Training};
       \node (destination) at (axis cs:10e3,9*10e-8){};
       \draw[blue, ->](source)--(destination);
\addplot[red,very thick, mark=none,dashed]table [y expr=\thisrow{b},x =a, col sep = comma]{Graphics/Data/Sine/ValH1Col.csv};
\node[anchor=west] (source) at (axis cs:10e1,0.16){\color{red}Validation};
       \node (destination) at (axis cs:10e2,50*10e-8){};
       \draw[red, ->](source)--(destination);

\end{axis}
\end{tikzpicture}
\end{center}
\end{subfigure}&
\begin{subfigure}[b]{0.45\textwidth}
\begin{center}
\begin{tikzpicture}
\begin{axis}[
  xlabel= $\sqrt{\mathcal{L}_{col}}$,
  ylabel=$H^1$-error,
xmin =10e-5,
xmax=10e0,
ymin=10e-6,
ymax=10e1,
height=4cm,
width=0.9\textwidth,
xmode=log,
ymode=log,
legend style={at={(0.02,0.98)},anchor=north west}
]
\addplot [only marks, blue, mark size=2pt, mark=+]table [y expr=sqrt(\thisrow{c}),x expr=sqrt(\thisrow{b}), col sep = comma]{Graphics/Data/Sine/LossH1Col.csv};
\addlegendentry{Training}
\addplot[only marks, red, mark size=2pt, mark=x]table [y expr=sqrt(\thisrow{c}), ,x expr=sqrt(\thisrow{b}), col sep = comma]{Graphics/Data/Sine/ValH1Col.csv};
\addlegendentry{Validation}
\node (s1) at (axis cs:10e-5,10e-5/8){};
\node (s2) at (axis cs:10e2,10e2/8){};
\draw (s1)--(s2);
\end{axis}
\end{tikzpicture}
\end{center}
\end{subfigure}\\
\hline
\end{tabular}}
\end{center}
\caption{Model Problem 1. Evolution of the loss for the three considered losses on both the training and validation data sets, and the correlation between the loss and the relative $H^1$-error during training, with a straight line corresponding to a linear relationship.}
\label{figMP1Losses}
\end{figure}


\begin{table}[H]
\begin{center}
\begin{tabular}{|c |c |c |c |c|c|c|c|c|c|}
\hline
 & H1 (\%) &L2 (\%) \\
 \hline
$u_{V^*}$ & $7.11\times 10^{-3}$ & $1.26\times 10^{-3}$ \\
\hline
$u_{\vpinn}$ & $2.03\times 10^{-3}$ & $4.56\times 10^{-4}$ \\
\hline
$u_{col}$&  $2.07\times 10^{-3}$ & $5.16\times 10^{-4}$ \\
\hline
\end{tabular}
\end{center}
\caption{Model Problem 1: Errors and losses}
\label{tableStatsMP1}
\end{table}

\Cref{figMP1Sols} shows the obtained solutions and their errors, along with the $H^1$-relative error evolution during training, where we denote errors by $\epsilon$, so that $\epsilon_{X}=u^*-u_X$ for $X=V^*,\vpinn$, and $col$. \Cref{tableStatsMP1} presents the numerical $H^1$- and $L^2$-relative errors of the obtained solutions. We compare the evolution of the loss during training and the relationship between the loss and error in \Cref{figMP1Losses}. 

We observe that all three approximations $u_{V^*},u_{\vpinn}$, and $u_{col}$ converge well to the solution, and quantitatively we see via \Cref{tableStatsMP1} that the relative $H^1$ and $L^2$ errors are small in each case, around $10^{-3}\%$. Generally, all metrics are comparable between the three obtained solutions, as expected by the theory, due to the presence of only low-frequency modes. In particular, we see that the implementations of $\mathcal{L}_{\vpinn}$ and $\mathcal{L}_{\text{col}}$ are almost identical, which is as expected as they can both be interpreted as discretisations of the $L^2$-norm of the strong-form residual. The slight differences in metrics may easily arise from the optimisation procedure, rather than the losses themselves. 

In the column ``Loss/error correlation" of \Cref{figMP1Losses} we show the relationship between the square root of the losses and the relative $H^{-1}$ error during training. As expected, in view of Equation \eqref{eqUpLowerBd}, the square root of the discretised loss $\mathcal{L}_{V^*}$ is an excellent approximation of the $H^{-1}$ norm of the residual. A similar behaviour is observed with the remaining losses: $\mathcal{L}_{\vpinn}$ and $\mathcal{L}_{col}$. However, in these cases, we see slight perturbations in this linear relationship, as expected.

\subsubsection{Model Problem 2 - Large gradients}
\label{ModelProblem2}
Our next model problem is: find $u\in H^1(0,\pi)$ with $u(0)=0$ that satisfies 
\begin{equation}
\int_0^\pi u'(x)v'(x)-2a^2\frac{\tanh\left(a\left(x-\frac{\pi}{2}\right)\right)}{\cosh\left(a\left(x-\frac{\pi}{2}\right)\right)^2}v(x)\,dx +a\text{sech}\left(a\left(\frac{\pi}{2}\right)\right)^2v(\pi)=0
\end{equation}
for all $v\in H^1(0,\pi)$ with $v(0)=0$. The exact solution is given by 
\begin{equation}
u^*(x)=\tanh\left(a\left(x-\frac{\pi}{2}\right)\right)+\tanh\left(\frac{a\pi}{2}\right)
\end{equation}
We consider this example as this admits $C^\infty$ solutions for all $a$, but for $a$ large, a transition region with high gradients develops in the solution. In particular, the forcing term, whilst being a smooth $L^2$ function, has a large discrepancy between its norm in $V^*$ and $L^2$, and thus we expect to see significant differences according to the loss implemented. For our implementation, we take $a=20$. 

As we have a Neumann condition at $x=0$, for the collocation method we need to include a further term to enforce the constraint. Our implementation for the collocation loss $\mathcal{L}_{col}$ uses equal weights for the interior and boundary terms, i.e.

\begin{equation}
\mathcal{L}_{col}(u):=\left|u'(\pi)-a\text{sech}\left(\frac{a\pi}{2}\right)\right|^2+\frac{1}{N}\sum\limits_{i=1}^N |L_u(x_i)|^2,
\end{equation}
where $L_u$ is the strong-form residual. Generally, one could choose to weight the two components of the loss differently, and the choice of weight is an extra parameter which may effect the convergence of the model. An advantage of the weak formulation, however, is that it does not require such a choice. Whilst methods exist to attempt to estimate optimal weights during training within certain settings \cite{wang2022and}, we do not consider them in this work. Despite the need to choose an appropriate weight, we observe in \Cref{figMP2H1Evolve} that there is very little difference between the $H^1$-error evolution using $\mathcal{L}_{\vpinn}$ and $\mathcal{L}_{col}$, suggesting that, in this example, the choice of weight is unimportant.

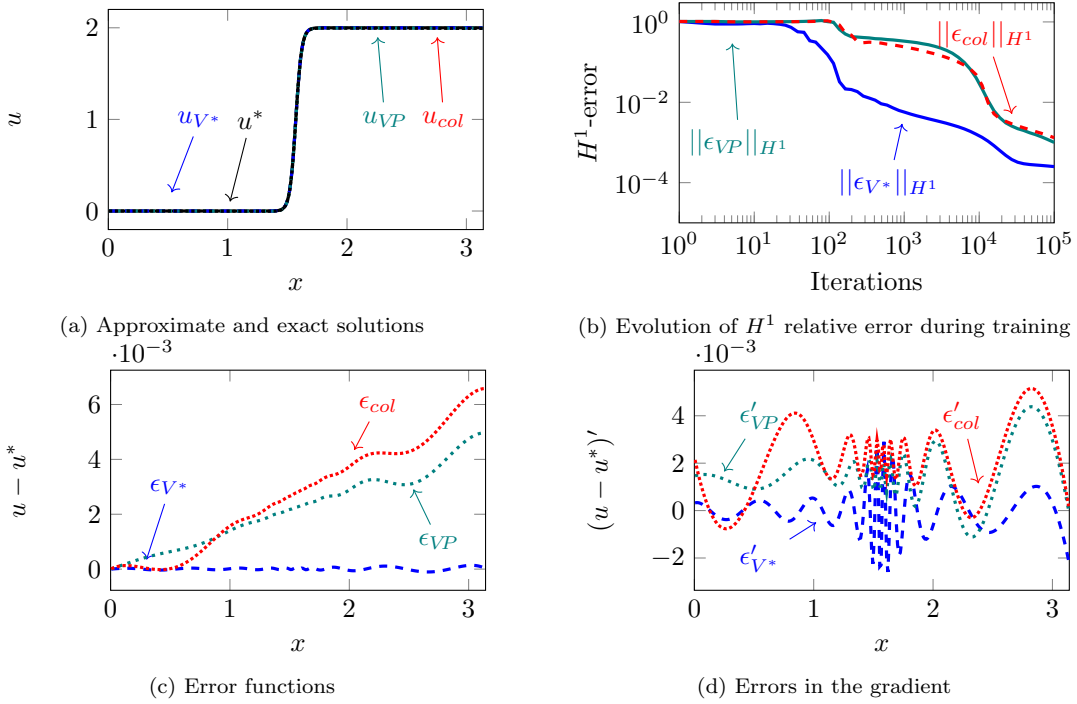
\begin{figure}[H]
\begin{tabular}{c c}
\begin{subfigure}[t]{0.475\textwidth}
\begin{center}
\begin{tikzpicture}
\begin{axis}[
  xlabel=$x$,
  ylabel=$u$,
xmin = 0,
xmax=3.1416,
height=4.5cm,
width=0.9\textwidth]
\addplot [mark=none,very thick] table [y=b, x=a, col sep = comma]{Graphics/Data/Arctan2/SolExact.csv};
\node[anchor=west] (source) at (axis cs:1.0,1){$u^*$};
       \node (destination) at (axis cs:1.,0){};
       \draw[->](source)--(destination);
\addplot [mark=none, blue, very thick,dashed ]table [y=b, x=a, col sep = comma]{Graphics/Data/Arctan2/SolDual.csv};
\node[anchor=west] (source) at (axis cs:0.5,1){\color{blue}$u_{V^*}$};
       \node (destination) at (axis cs:0.5,0.1){};
       \draw[blue, ->](source)--(destination);
\addplot [mark=none, teal!100, very thick,dotted] table [y=b, x=a, col sep = comma]{Graphics/Data/Arctan2/SolL2.csv};
\node[anchor=west] (source) at (axis cs:2.05,1){\color{ teal!100}$u_{\vpinn}$};
       \node (destination) at (axis cs:2.25,2){};
       \draw[teal, ->](source)--(destination);
\addplot [mark=none red, very thick, densely dotted] table [y=b, x=a, col sep = comma,mark=none]{Graphics/Data/Arctan2/SolCol.csv};
\node[anchor=west] (source) at (axis cs:2.55,1){\color{red}$u_{col}$};
       \node (destination) at (axis cs:2.75,2){};
       \draw[red, ->](source)--(destination);
\end{axis}
\end{tikzpicture}
\caption{ Approximate and exact solutions}
\end{center}
\end{subfigure}&
\begin{subfigure}[t]{0.475\textwidth}
\begin{center}
\begin{tikzpicture}
\begin{axis}[
  xlabel=Iterations,
  ylabel=$H^1$-error,
xmin =1,
xmax=10e4,
ymin=10e-6,
ymax=3,
height=4.5cm,
width=0.9\textwidth,
xmode=log,
ymode=log]
\addplot [mark=none, blue, very thick ]table [y expr=sqrt(\thisrow{c}),x=a, col sep = comma]{Graphics/Data/Arctan2/LossH1Dual.csv};
\node[anchor=west] (source) at (axis cs:10e1,010e-5){\color{blue}$||\epsilon_{V^*}||_{H^1}$};
       \node (destination) at (axis cs:10e2,5*10e-4){};
       \draw[blue, ->](source)--(destination);
\addplot [mark=none,teal,very thick ]table [y expr=sqrt(\thisrow{c}),x=a, col sep = comma]{Graphics/Data/Arctan2/LossH1L2.csv};
\node[anchor=west] (source) at (axis cs:1,10e-4){\color{teal}$||\epsilon_{\vpinn}||_{H^1}$};
       \node (destination) at (axis cs:5,5*10e-2){};
       \draw[teal, ->](source)--(destination);
\addplot [mark=none, red, very thick ,dashed]table [y expr=sqrt(\thisrow{c}),x=a, col sep = comma]{Graphics/Data/Arctan2/LossH1Col.csv};
\node[anchor=west] (source) at (axis cs:0.2*10e3,5*10e-2){\color{red}$||\epsilon_{col}||_{H^1}$};
       \node (destination) at (axis cs:3*10e3,0.3*10e-3){};
       \draw[red, ->](source)--(destination);
\end{axis}
\end{tikzpicture}
\caption{Evolution of $H^1$ relative error during training}\label{figMP2H1Evolve}
\end{center}
\end{subfigure}\\
\begin{subfigure}[t]{0.475\textwidth}
\begin{center}
\begin{tikzpicture}
\begin{axis}[
  xlabel=$x$,
  ylabel=$u-u^*$,
xmin = 0,
xmax=3.1416,
height=4.5cm,
width=0.9\textwidth]
\addplot [mark=none, blue, very thick,dashed ]table [y=b, x=a, col sep = comma]{Graphics/Data/Arctan2/ErDual.csv};
\node[anchor=west] (source) at (axis cs:0.25,3*10e-4){\color{blue}$\epsilon_{V^*}$};
       \node (destination) at (axis cs:0.275,0.2*10e-6){};
       \draw[blue, ->](source)--(destination);
\addplot [mark=none, teal, very thick,dotted] table [y=b, x=a, col sep = comma]{Graphics/Data/Arctan2/ErL2.csv};
\node[anchor=west] (source) at (axis cs:2.5,1.*10e-4){\color{teal}$\epsilon_{\vpinn}$};
       \node (destination) at (axis cs:2.5,3.25*10e-4){};
       \draw[teal, ->](source)--(destination);
\addplot [mark=none, red, very thick, densely dotted] table [y=b, x=a, col sep = comma,mark=none]{Graphics/Data/Arctan2/ErCol.csv};
\node[anchor=west] (source) at (axis cs:2,6*10e-4){\color{red}$\epsilon_{col}$};
       \node (destination) at (axis cs:2,4*10e-4){};
       \draw[red, ->](source)--(destination);
\end{axis}
\end{tikzpicture}
\caption{Error functions}
\end{center}
\end{subfigure}&
\begin{subfigure}[t]{0.475\textwidth}
\begin{center}
\begin{tikzpicture}
\begin{axis}[
  xlabel=$x$,
  ylabel=$(u-u^*)'$,
xmin = 0,
xmax=3.1416,
height=4.5cm,
width=0.9\textwidth]
\addplot [mark=none, blue, very thick,dashed ]table [y=b, x=a, col sep = comma]{Graphics/Data/Arctan2/GradErDual.csv};
\node[anchor=west] (source) at (axis cs:0.3,-2*10e-4){\color{blue}$\epsilon_{V^*}'$};
       \node (destination) at (axis cs:1.1,-0.25*10e-4){};
       \draw[blue, ->](source)--(destination);
\addplot [mark=none, teal, very thick,dotted] table [y=b, x=a, col sep = comma]{Graphics/Data/Arctan2/GradErL2.csv};
\node[anchor=west] (source) at (axis cs:0.3,4*10e-4){\color{teal}$\epsilon_{\vpinn}'$};
       \node (destination) at (axis cs:0.2,1.25*10e-4){};
       \draw[teal, ->](source)--(destination);
\addplot [mark=none, red, very thick, densely dotted] table [y=b, x=a, col sep = comma,mark=none]{Graphics/Data/Arctan2/GradErCol.csv};
\node[anchor=west] (source) at (axis cs:2,4*10e-4){\color{red}$\epsilon_{col}'$};
       \node (destination) at (axis cs:2.4,0.75*10e-4){};
       \draw[red, ->](source)--(destination);
\end{axis}
\end{tikzpicture}
\caption{Errors in the gradient}
\end{center}
\end{subfigure}
\end{tabular}
\caption{ Model Problem 2: Obtained solutions and relative $H^1$-error evolution for the three methods}
\label{figMP2Sols}
\end{figure}

\begin{figure}
\begin{center}
\makebox[\textwidth][c]{\begin{tabular}{|  c | c | c |}
\hline
& Loss evolution & Loss/error correlation\\
\hline
 $\mathcal{L}_{V^*}$&
\begin{subfigure}[b]{0.45\textwidth}
\begin{center}

\begin{tikzpicture}
\begin{axis}[
  xlabel=Iterations,
  ylabel=Loss,
xmin =10e-1,
xmax=10e4,
ymin=10e-8,
ymax=10e4,
height=4cm,
width=\textwidth,
xmode=log,
ymode=log]
\addplot [very thick, blue, mark=none]table [y expr=\thisrow{b},x =a, col sep = comma]{Graphics/Data/Arctan2/LossH1Dual.csv};
\node[anchor=west] (source) at (axis cs:10e2,1){\color{blue}Training};
       \node (destination) at (axis cs:10e3,10e-6){};
       \draw[blue, ->](source)--(destination);
\addplot[very thick, red, mark=none,dashed]table [y expr=\thisrow{b},x =a, col sep = comma]{Graphics/Data/Arctan2/ValH1Dual.csv};
\node[anchor=west] (source) at (axis cs:10e1,64){\color{red}Validation};
       \node (destination) at (axis cs:10e2,25*10e-6){};
       \draw[red, ->](source)--(destination);

\end{axis}
\end{tikzpicture}
\end{center}
\end{subfigure}&
\begin{subfigure}[b]{0.45\textwidth}
\begin{center}
\begin{tikzpicture}
\begin{axis}[
  xlabel=$\sqrt{\mathcal{L}_{V^*}}$,
  ylabel=$H^1$-error,
xmin=0.1*10e-3,
xmax=10e1,
ymin=5*10e-5,
ymax=5*10e-1,
height=4cm,
width=0.9\textwidth,
xmode=log,
ymode=log,
legend style={at={(0.98,0.02)},anchor=south east}]
\addplot [only marks, blue, mark size=2pt, mark=+]table [y expr=sqrt(\thisrow{c}),x expr=sqrt(\thisrow{b}), col sep = comma]{Graphics/Data/Arctan2/LossH1Dual.csv};
\addlegendentry{Training}
\addplot[only marks, red, mark size=2pt, mark=x]table [y expr=sqrt(\thisrow{c}), ,x expr=sqrt(\thisrow{b}), col sep = comma]{Graphics/Data/Arctan2/ValH1Dual.csv};
\addlegendentry{Validation}
\node (s1) at (axis cs:10e-5,10e-5/8){};
\node (s2) at (axis cs:10e2,10e2/8){};
\draw (s1)--(s2);
\end{axis}
\end{tikzpicture}
\end{center}
\end{subfigure}\\
\hline
 $\mathcal{L}_{\vpinn}$&
\begin{subfigure}[b]{0.45\textwidth}
\begin{center}
\begin{tikzpicture}
\begin{axis}[
  xlabel=Iterations,
  ylabel=Loss,
xmin =10e-1,
xmax=10e4,
ymin=10e-8,
ymax=10e4,
height=4cm,
width=\textwidth,
xmode=log,
ymode=log]
\addplot [very thick, mark=none,blue]table [y expr=\thisrow{b},x =a, col sep = comma]{Graphics/Data/Arctan2/LossH1L2.csv};
\node[anchor=west] (source) at (axis cs:10e0,10e-4){\color{blue}Training};
       \node (destination) at (axis cs:10e0,10e2){};
       \draw[blue, ->](source)--(destination);
\addplot[very thick, mark=none,red,dashed ]table [y expr=\thisrow{b},x =a, col sep = comma]{Graphics/Data/Arctan2/ValH1L2.csv};
\node[anchor=west] (source) at (axis cs:10e2,10e-6){\color{red}Validation};
       \node (destination) at (axis cs:10e2,10e-2){};
       \draw[red, ->](source)--(destination);

\end{axis}
\end{tikzpicture}
\end{center}
\end{subfigure}&
\begin{subfigure}[b]{0.45\textwidth}
\begin{center}
\begin{tikzpicture}
\begin{axis}[
  xlabel=$\sqrt{\mathcal{L}_{\vpinn}}$,
  ylabel=$H^1$-error,
height=4cm,
width=0.9\textwidth,
xmode=log,
ymode=log,
xmin=0.1*10e-3,
xmax=10e1,
ymin=5*10e-5,
ymax=5*10e-1,
legend style={at={(0.98,0.02)},anchor=south east}]
\addplot [only marks, blue, mark size=2pt, mark=+]table [y expr=sqrt(\thisrow{c}),x expr=sqrt(\thisrow{b}), col sep = comma]{Graphics/Data/Arctan2/LossH1L2.csv};
\addlegendentry{Training}
\addplot[only marks, red, mark size=2pt, mark=x]table [y expr=sqrt(\thisrow{c}), ,x expr=sqrt(\thisrow{b}), col sep = comma]{Graphics/Data/Arctan2/ValH1L2.csv};
\addlegendentry{Validation}
\node (s1) at (axis cs:10e-5,10e-5/8){};
\node (s2) at (axis cs:10e2,10e2/8){};
\draw (s1)--(s2);
\end{axis}
\end{tikzpicture}
\end{center}
\end{subfigure}\\
\hline
 $\mathcal{L}_{col}$&
\begin{subfigure}[b]{0.45\textwidth}
\begin{center}
\begin{tikzpicture}
\begin{axis}[
  xlabel=Iterations,
  ylabel=Loss,
xmin =10e-1,
xmax=10e4,
ymin=10e-8,
ymax=10e4,
height=4cm,
width=\textwidth,
xmode=log,
ymode=log]
\addplot [very thick, blue, mark=none]table [y expr=\thisrow{b},x =a, col sep = comma]{Graphics/Data/Arctan2/LossH1Col.csv};
\node[anchor=west] (source) at (axis cs:10e0,10e-6){\color{blue}Training};
       \node (destination) at (axis cs:10e0,10e2){};
       \draw[blue, ->](source)--(destination);
\addplot[red,very thick, mark=none,dashed]table [y expr=\thisrow{b},x =a, col sep = comma]{Graphics/Data/Arctan2/ValH1Col.csv};
\node[anchor=west] (source) at (axis cs:10e2,10e-6){\color{red}Validation};
       \node (destination) at (axis cs:10e2,0.49*10e-2){};
       \draw[red, ->](source)--(destination);

\end{axis}
\end{tikzpicture}
\end{center}
\end{subfigure}&
\begin{subfigure}[b]{0.45\textwidth}
\begin{center}
\begin{tikzpicture}
\begin{axis}[
  xlabel= $\sqrt{\mathcal{L}_{col}}$,
  ylabel=$H^1$-error,
height=4cm,
width=0.9\textwidth,
xmode=log,
ymode=log,
xmin=0.5*10e-3,
xmax=10e1,
ymin=5*10e-5,
ymax=5*10e-1,
legend style={at={(0.98,0.02)},anchor=south east}
]
\addplot [only marks, blue, mark size=2pt, mark=+]table [y expr=sqrt(\thisrow{c}),x expr=sqrt(\thisrow{b}), col sep = comma]{Graphics/Data/Arctan2/LossH1Col.csv};
\addlegendentry{Training}
\addplot[only marks, red, mark size=2pt, mark=x]table [y expr=sqrt(\thisrow{c}), ,x expr=sqrt(\thisrow{b}), col sep = comma]{Graphics/Data/Arctan2/ValH1Col.csv};
\addlegendentry{Validation}
\node (s1) at (axis cs:10e-5,10e-5/8){};
\node (s2) at (axis cs:10e2,10e2/8){};
\draw (s1)--(s2);
\end{axis}
\end{tikzpicture}
\end{center}
\end{subfigure}\\
\hline
\end{tabular}}
\end{center}
\caption{Model Problem 2. Evolution of the loss for the three considered losses on both the training and validation data sets, and the correlation between the loss and the relative $H^1$-error during training, with a straight line corresponding to a linear relationship.}
\label{figMP2Losses}
\end{figure}

%

\begin{table}[H]
\begin{center}
\begin{tabular}{|c |c |c |c |c|c|c|c|c|c|}
\hline
&H1 (\%) &L2 (\%)  \\
 \hline
$u_{V^*}$ &  $0.025$ & $0.004$ \\
\hline
$u_{\vpinn}$ &  $0.100$ & $0.184$ \\
\hline
$u_{col}$ &  $0.132$ & $0.242$  \\
\hline
\end{tabular}
\end{center}
\caption{Model problem 2: Errors and losses}
\label{tabMP2Losses}
\end{table}

\Cref{figMP2Sols} shows that all three methods converge to the exact solution, and this is seen quantitatively in \Cref{tabMP2Losses}. We see that $u_{V^*}$ shows the best performance, both in terms of speed of convergence and the error of the solution at the end of training. Similar to Model Problem 1, we see that $\mathcal{L}_{\vpinn}$ and $\mathcal{L}_{col}$ show similar behaviours in all regards. As before, the correlation between the $H^1$-error and the square root of $\mathcal{L}_{V^*}$ is extremely strong, showing a directly proportional relationship between them. This behaviour however is no longer seen when $\mathcal{L}_{\vpinn}$ and $\mathcal{L}_{col}$ are implemented. In particular, we observe that during the initial training, $\mathcal{L}_{\vpinn}$ and $\mathcal{L}_{col}$ decrease by several orders of magnitude before the error itself starts to decrease significantly. Thus, we demonstrate that even in the case of a highly regular problem with $C^\infty$ solution, both $\mathcal{L}_{col}$ and $\mathcal{L}_{\vpinn}$ can fail to be good estimators of the $H^1$-error, whilst the DFR method, due to its directly proportional relationship, permits good $H^1$-error estimation.

%

\subsubsection{Model Problem 3 - Discontinuous parameters}
\label{ModelProblem3}

We next consider an ODE with discontinuous parameters, whose solution is in $H^1$ but is not $C^1$ nor $H^2$. We take $\Gamma_D=\{0,\pi\}$ and aim to solve 

\begin{equation}
\int_0^\pi \sigma(x)u'(x)v'(x)-4\sin(2x)v(x)\,dx=0
\end{equation}
for all $v\in H^1_0(0,\pi)$, where 

\begin{equation}\label{eqStrongSolSigmaDiscSoln}
\sigma(x)=\left\{\begin{array}{c c}
1 & x<\frac{\pi}{2},\\
2  & x>\frac{\pi}{2}.
\end{array}\right.
\end{equation}
The exact solution to this problem is given by
\begin{equation}
u^*(x)=\left\{\begin{array}{c c}
\sin 2x & x<\frac{\pi}{2},\\
\frac{1}{2}\sin 2x  & x>\frac{\pi}{2}.
\end{array}\right.
\end{equation}
In particular, $u^*$ admits a jump discontinuity in its gradient at $x=\frac{\pi}{2}$.

Since $\sigma$ is discontinuous, the strong and weak forms are not equivalent. In particular, as $\sigma$ is piecewise constant, outside of its single point of discontinuity there is no difference in the (strong) PDEs between 
\begin{equation}
(\sigma(x)u'(x))'+4\sin 2x=0
\end{equation}
and 
\begin{equation}\label{eqStrongSolSigmaDisc}
u''(x)+\frac{4}{\sigma(x)}\sin 2x=0.
\end{equation}
There is a unique $C^1$ function which solves \eqref{eqStrongSolSigmaDisc} on $(0,\pi)\setminus \left\{\frac{\pi}{2}\right\}$, given by 
\begin{equation}\label{eqStrongSolSigmaDiscSoln}
\tilde{u}(x)=\left\{\begin{array}{c c}
\sin 2x+\frac{1}{2}x& x<\frac{\pi}{2},\\
\frac{1}{2}\sin 2x -\frac{1}{2}(x-\pi)  & x>\frac{\pi}{2}.
\end{array}\right.
\end{equation}
We expect the collocation loss $\mathcal{L}_{col}$ to fail, as it is ill-equipped to handle PDEs that lack an equivalent strong formulation. Whilst the discretised VPINN loss $\mathcal{L}_{\vpinn}$ is well defined at any canididate solution, due to the discontinuity in $\sigma$, for a general, smooth, trial function $u$, the series \eqref{eqDefL2Loss} should diverge as the number of basis functions tends to infinity, as the residual cannot generally be expressed as an $L^2$ function, so we expect the results to not be trustworthy.

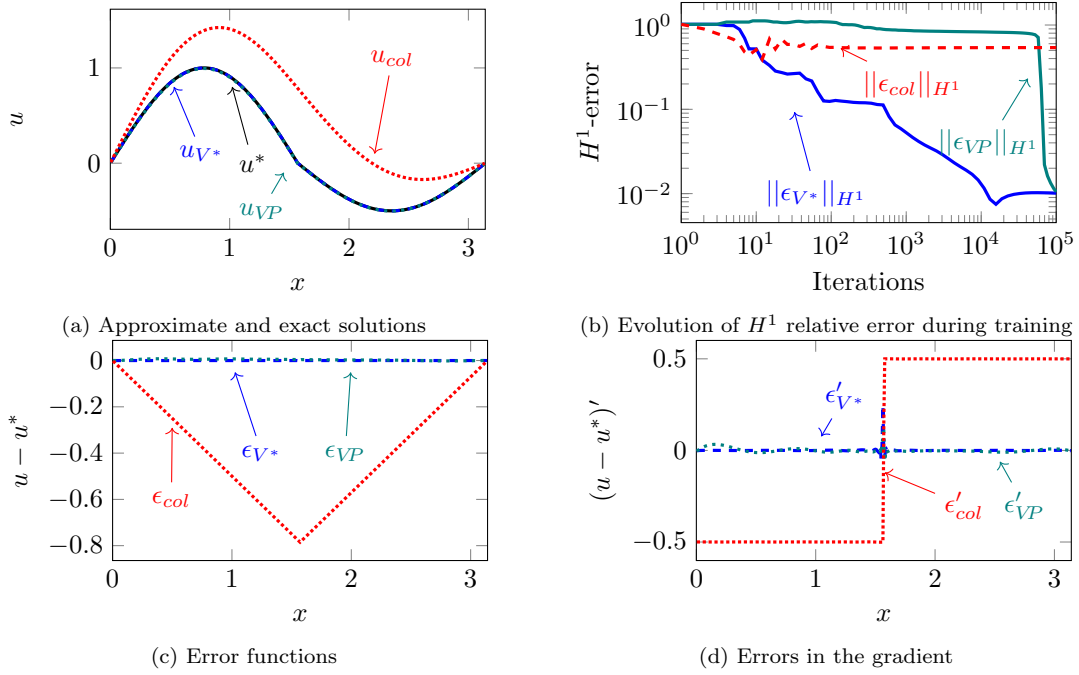
\begin{figure}[H]
\begin{tabular}{c c}
\begin{subfigure}[t]{0.475\textwidth}
\begin{center}
\begin{tikzpicture}
\begin{axis}[
  xlabel=$x$,
  ylabel=$u$,
xmin = 0,
xmax=3.1416,
height=4.5cm,
width=0.9\textwidth]
\addplot [mark=none,very thick] table [y=b, x=a, col sep = comma]{Graphics/Data/DiscSigma2/SolExact.csv};
\node[anchor=west] (source) at (axis cs:1.0,0){$u^*$};
       \node (destination) at (axis cs:1.,0.9){};
       \draw[->](source)--(destination);
\addplot [mark=none, blue, very thick,dashed ]table [y=b, x=a, col sep = comma]{Graphics/Data/DiscSigma2/SolDual.csv};
\node[anchor=west] (source) at (axis cs:0.5,.1){\color{blue}$u_{V^*}$};
       \node (destination) at (axis cs:0.5,0.9){};
       \draw[blue, ->](source)--(destination);
\addplot [mark=none, teal!100, very thick,dotted] table [y=b, x=a, col sep = comma]{Graphics/Data/DiscSigma2/SolL2.csv};
\node[anchor=west] (source) at (axis cs:1,-0.5){\color{ teal!100}$u_{\vpinn}$};
       \node (destination) at (axis cs:1.5,0){};
       \draw[teal, ->](source)--(destination);
\addplot [mark=none, red, very thick, densely dotted] table [y=b, x=a, col sep = comma,mark=none]{Graphics/Data/DiscSigma2/SolCol.csv};
\node[anchor=west] (source) at (axis cs:2.1,1.1){\color{red}$u_{col}$};
       \node (destination) at (axis cs:2.2,0){};
       \draw[red, ->](source)--(destination);
\end{axis}
\end{tikzpicture}
\caption{ Approximate and exact solutions}
\end{center}
\end{subfigure}&
\begin{subfigure}[t]{0.475\textwidth}
\begin{center}
\begin{tikzpicture}
\begin{axis}[
  xlabel=Iterations,
  ylabel=$H^1$-error,
xmin =1,
xmax=10e4,
height=4.5cm,
width=0.9\textwidth,
xmode=log,
ymode=log]
\addplot [mark=none, blue, very thick ]table [y expr=sqrt(\thisrow{c}),x=a, col sep = comma]{Graphics/Data/DiscSigma2/LossH1Dual.csv};
\node[anchor=west] (source) at (axis cs:10e0,10e-3){\color{blue}$||\epsilon_{V^*}||_{H^1}$};
       \node (destination) at (axis cs:3*10e0,1.2*10e-2){};
       \draw[blue, ->](source)--(destination);
\addplot [mark=none,teal,very thick ]table [y expr=sqrt(\thisrow{c}),x=a, col sep = comma]{Graphics/Data/DiscSigma2/LossH1L2.csv};
\node[anchor=west] (source) at (axis cs:0.2*10e3,0.4*10e-2){\color{teal}$||\epsilon_{\vpinn}||_{H^1}$};
       \node (destination) at (axis cs:5*10e3,0.3*10e-1){};
       \draw[teal, ->](source)--(destination);
\addplot [mark=none, red, very thick ,dashed]table [y expr=sqrt(\thisrow{c}),x=a, col sep = comma]{Graphics/Data/DiscSigma2/LossH1Col.csv};
\node[anchor=west] (source) at (axis cs:0.2*10e2,2*10e-2){\color{red}$||\epsilon_{col}||_{H^1}$};
       \node (destination) at (axis cs:10e1,6*10e-2){};
       \draw[red, ->](source)--(destination);
\end{axis}
\end{tikzpicture}
\caption{Evolution of $H^1$ relative error during training}
\end{center}
\end{subfigure}\\
\begin{subfigure}[t]{0.475\textwidth}
\begin{center}
\begin{tikzpicture}
\begin{axis}[
  xlabel=$x$,
  ylabel=$u-u^*$,
xmin = 0,
xmax=3.1416,
height=4.5cm,
width=0.9\textwidth]
\addplot [mark=none, blue, very thick,dashed ]table [y=b, x=a, col sep = comma]{Graphics/Data/DiscSigma2/ErDual.csv};
\node[anchor=west] (source) at (axis cs:1,-0.4){\color{blue}$\epsilon_{V^*}$};
       \node (destination) at (axis cs:1,-0.01){};
       \draw[blue, ->](source)--(destination);
\addplot [mark=none, teal, very thick,dotted] table [y=b, x=a, col sep = comma]{Graphics/Data/DiscSigma2/ErL2.csv};
\node[anchor=west] (source) at (axis cs:1.7,-0.4){\color{teal}$\epsilon_{\vpinn}$};
       \node (destination) at (axis cs:2,-0.01){};
       \draw[teal, ->](source)--(destination);
\addplot [mark=none, red, very thick, densely dotted] table [y=b, x=a, col sep = comma,mark=none]{Graphics/Data/DiscSigma2/ErCol.csv};
\node[anchor=west] (source) at (axis cs:0.25,-0.6){\color{red}$\epsilon_{col}$};
       \node (destination) at (axis cs:0.5,-0.25){};
       \draw[red, ->](source)--(destination);
\end{axis}
\end{tikzpicture}
\caption{Error functions}
\end{center}
\end{subfigure}&
\begin{subfigure}[t]{0.475\textwidth}
\begin{center}
\begin{tikzpicture}
\begin{axis}[
  xlabel=$x$,
  ylabel=$(u-u^*)'$,
xmin = 0,
xmax=3.1416,
height=4.5cm,
width=0.9\textwidth]
\addplot [mark=none, blue, very thick,dashed ]table [y=b, x=a, col sep = comma]{Graphics/Data/DiscSigma2/GradErDual.csv};
\node[anchor=west] (source) at (axis cs:1,0.3){\color{blue}$\epsilon_{V^*}'$};
       \node (destination) at (axis cs:1,0.025){};
       \draw[blue, ->](source)--(destination);
\addplot [mark=none, teal, very thick,dotted] table [y=b, x=a, col sep = comma]{Graphics/Data/DiscSigma2/GradErL2.csv};
\node[anchor=west] (source) at (axis cs:2.5,-0.3){\color{teal}$\epsilon_{\vpinn}'$};
       \node (destination) at (axis cs:2.5,-0.01){};
       \draw[teal, ->](source)--(destination);
\addplot [mark=none, red, very thick, densely dotted] table [y=b, x=a, col sep = comma,mark=none]{Graphics/Data/DiscSigma2/GradErCol.csv};
\node[anchor=west] (source) at (axis cs:2,-0.3){\color{red}$\epsilon_{col}'$};
       \node (destination) at (axis cs:1.5,-0.1){};
       \draw[red, ->](source)--(destination);
\end{axis}
\end{tikzpicture}
\caption{Errors in the gradient}
\end{center}
\end{subfigure}
\end{tabular}
\caption{ Model Problem 3: Obtained solutions and relative $H^1$-error evolution for the three methods}
\label{figMP3Sols}
\end{figure}
	

\begin{figure}
\begin{center}
\makebox[\textwidth][c]{\begin{tabular}{|  c | c | c |}
\hline
& Loss evolution & Loss/error correlation\\
\hline
 $\mathcal{L}_{V^*}$&
\begin{subfigure}[b]{0.45\textwidth}
\begin{center}

\begin{tikzpicture}
\begin{axis}[
  xlabel=Iterations,
  ylabel=Loss,
xmin =10e-1,
xmax=10e4,
ymin=10e-8,
ymax=25*10e0,
height=4cm,
width=\textwidth,
xmode=log,
ymode=log]
\addplot [very thick, blue, mark=none]table [y expr=\thisrow{b},x =a, col sep = comma]{Graphics/Data/DiscSigma2/LossH1Dual.csv};
\node[anchor=west] (source) at (axis cs:10e2,25){\color{blue}Training};
       \node (destination) at (axis cs:10e3,10e-4){};
       \draw[blue, ->](source)--(destination);
\addplot[very thick, red, mark=none,dashed]table [y expr=\thisrow{b},x =a, col sep = comma]{Graphics/Data/DiscSigma2/ValH1Dual.csv};
\node[anchor=west] (source) at (axis cs:10e0,0.0001){\color{red}Validation};
       \node (destination) at (axis cs:10e2,0.01){};
       \draw[red, ->](source)--(destination);

\end{axis}
\end{tikzpicture}
\end{center}
\end{subfigure}&
\begin{subfigure}[b]{0.45\textwidth}
\begin{center}
\begin{tikzpicture}
\begin{axis}[
  xlabel=$\sqrt{\mathcal{L}_{V^*}}$,
  ylabel=$H^1$-error,
height=4cm,
width=0.9\textwidth,
xmode=log,
ymode=log,
xmin=10e-4,
xmax=10e0,
ymin=10e-4,
ymax=5*10e-1,
legend style={at={(0.02,0.98)},anchor=north west}]
\addplot [only marks, blue, mark size=2pt, mark=+]table [y expr=sqrt(\thisrow{c}),x expr=sqrt(\thisrow{b}), col sep = comma]{Graphics/Data/DiscSigma2/LossH1Dual.csv};
\addlegendentry{Training}
\addplot[only marks, red, mark size=2pt, mark=x]table [y expr=sqrt(\thisrow{c}), ,x expr=sqrt(\thisrow{b}), col sep = comma]{Graphics/Data/DiscSigma2/ValH1Dual.csv};
\addlegendentry{Validation}
\node (s1) at (axis cs:10e-5,10e-5/8){};
\node (s2) at (axis cs:10e2,10e2/8){};
\draw (s1)--(s2);
\end{axis}
\end{tikzpicture}
\end{center}
\end{subfigure}\\
\hline
 $\mathcal{L}_{\vpinn}$&
\begin{subfigure}[b]{0.45\textwidth}
\begin{center}
\begin{tikzpicture}
\begin{axis}[
  xlabel=Iterations,
  ylabel=Loss,
xmin =10e-1,
xmax=10e4,
ymin=10e-8,
ymax=25*10e0,
height=4cm,
width=\textwidth,
xmode=log,
ymode=log]
\addplot [very thick, mark=none,blue]table [y expr=\thisrow{b},x =a, col sep = comma]{Graphics/Data/DiscSigma2/LossH1L2.csv};
\node[anchor=west] (source) at (axis cs:10e0,25*10e-6){\color{blue}Training};
       \node (destination) at (axis cs:10e0,10e1){};
       \draw[blue, ->](source)--(destination);
\addplot[very thick, mark=none,red,dashed ]table [y expr=\thisrow{b},x =a, col sep = comma]{Graphics/Data/DiscSigma2/ValH1L2.csv};
\node[anchor=west] (source) at (axis cs:10e2,49*10e-6){\color{red}Validation};
       \node (destination) at (axis cs:10e2,10){};
       \draw[red, ->](source)--(destination);

\end{axis}
\end{tikzpicture}
\end{center}
\end{subfigure}&
\begin{subfigure}[b]{0.45\textwidth}
\begin{center}
\begin{tikzpicture}
\begin{axis}[
  xlabel=$\sqrt{\mathcal{L}_{\vpinn}}$,
  ylabel=$H^1$-error,
height=4cm,
width=0.9\textwidth,
xmode=log,
ymode=log,
xmin=10e-4,
xmax=10e0,
ymin=10e-4,
ymax=5*10e-1,
legend style={at={(0.02,0.98)},anchor=north west}]
\addplot [only marks, blue, mark size=2pt, mark=+]table [y expr=sqrt(\thisrow{c}),x expr=sqrt(\thisrow{b}), col sep = comma]{Graphics/Data/DiscSigma2/LossH1L2.csv};
\addlegendentry{Training}
\addplot[only marks, red, mark size=2pt, mark=x]table [y expr=sqrt(\thisrow{c}), ,x expr=sqrt(\thisrow{b}), col sep = comma]{Graphics/Data/DiscSigma2/ValH1L2.csv};
\addlegendentry{Validation}
\node (s1) at (axis cs:10e-5,10e-5/8){};
\node (s2) at (axis cs:10e2,10e2/8){};
\draw (s1)--(s2);
\end{axis}
\end{tikzpicture}
\end{center}
\end{subfigure}\\
\hline
 $\mathcal{L}_{col}$&
\begin{subfigure}[b]{0.45\textwidth}
\begin{center}
\begin{tikzpicture}
\begin{axis}[
  xlabel=Iterations,
  ylabel=Loss,
xmin =10e-1,
xmax=10e4,
ymin=10e-8,
ymax=25*10e0,
height=4cm,
width=\textwidth,
xmode=log,
ymode=log]
\addplot [very thick, blue, mark=none]table [y expr=\thisrow{b},x =a, col sep = comma]{Graphics/Data/DiscSigma2/LossH1Col.csv};
\node[anchor=west] (source) at (axis cs:10e0,10e-6){\color{blue}Training};
       \node (destination) at (axis cs:10e0,5*10e-2){};
       \draw[blue, ->](source)--(destination);
\addplot[red,very thick, mark=none,dashed]table [y expr=\thisrow{b},x =a, col sep = comma]{Graphics/Data/DiscSigma2/ValH1Col.csv};
\node[anchor=west] (source) at (axis cs:10e2,10e0){\color{red}Validation};
       \node (destination) at (axis cs:10e2,3*10e-4){};
       \draw[red, ->](source)--(destination);

\end{axis}
\end{tikzpicture}
\end{center}
\end{subfigure}&
\begin{subfigure}[b]{0.45\textwidth}
\begin{center}
\begin{tikzpicture}
\begin{axis}[
  xlabel= $\sqrt{\mathcal{L}_{col}}$,
  ylabel=$H^1$-error,
height=4cm,
width=0.9\textwidth,
xmode=log,
ymode=log,
xmin=10e-4,
xmax=10e0,
ymin=10e-4,
ymax=5*10e-1,
legend style={at={(0.98,0.02)},anchor=south east}
]
\addplot [only marks, blue, mark size=2pt, mark=+]table [y expr=sqrt(\thisrow{c}),x expr=sqrt(\thisrow{b}), col sep = comma]{Graphics/Data/DiscSigma2/LossH1Col.csv};
\addlegendentry{Training}
\addplot[only marks, red, mark size=2pt, mark=x]table [y expr=sqrt(\thisrow{c}), ,x expr=sqrt(\thisrow{b}), col sep = comma]{Graphics/Data/DiscSigma2/ValH1Col.csv};
\addlegendentry{Validation}
\node (s1) at (axis cs:10e-5,10e-5/8){};
\node (s2) at (axis cs:10e2,10e2/8){};
\draw (s1)--(s2);
\end{axis}
\end{tikzpicture}
\end{center}
\end{subfigure}\\
\hline
\end{tabular}}
\end{center}
\caption{Model Problem 3. Evolution of the loss for the three considered losses on both the training and validation data sets, and the correlation between the loss and the relative $H^1$-error during training, with a straight line corresponding to a linear relationship.}
\label{figMP3Losses}
\end{figure}


\begin{table}[h!]
\begin{center}
\begin{tabular}{|c |c |c |c |c|c|c|c|c|c|}
\hline
 &H1 (\%) &L2 (\%)  \\
 \hline
$u_{V^*}$ & $1.01$ & $0.0147$  \\
\hline
$u_{\vpinn}$ & $0.988$ & $0.751$  \\
\hline
$u_{col}$ & $54.0$ & $81.1$ \\
\hline
\end{tabular}
\end{center}
\caption{Model Problem 3: Errors and losses}
\label{tabDiscSigma}
\end{table}

Both qualitatively in \Cref{figMP3Sols} and quantitatively in \Cref{tabDiscSigma} we see that $u_{V^*}$ produces a good approximation of the exact solution. \Cref{figMP3Losses} shows overfitting during the training of $u_{V^*}$ at around $2\times 10^4$ iterations, as shown by the divergence of the loss on the training and validation sets. At this point, the $H^1$ relative error stagnates and ceases to decrease significantly. Before overfitting occurs, we observe a perfect linear relationship between the square root of the loss on the training data, however the validation loss remains directly proportional until an uptick corresponding to the region where the validation loss plateaus. This is also the point at which the $H^1$-error reaches its minimum, and later begins to increase. 

Unsurprisingly, we see that $u_{col}$ approximates \eqref{eqStrongSolSigmaDiscSoln}, rather than $u^*$, as the loss implemented corresponds precisely to \eqref{eqStrongSolSigmaDisc}, and thus produces a very poor solution. In contrast to the previous examples, as the residual is generally not expressable as a function in $L^2$, we observe a very large discrepancy between the behaviour of $\mathcal{L}_{V^*}$ and $\mathcal{L}_{\vpinn}$.

Furthermore, during the training of $u_{\vpinn}$, in \Cref{figMP3Losses} we see what would appear to be an extreme case of overfitting due to a large discrepency between the loss evaluated on the training and validation set. However this does not translate into errors, and by comparing the $H^1$-error evolution in \Cref{figMP3Sols} with the loss evolution in \Cref{figMP3Losses}, we see that precisely at the point during training where this ``overfitting" takes place, around $5\times 10^5$ iterations, the $H^1$-error begins to drop significantly and we obtain a good approximation to $u^*$. This is not, however, paradoxical, as we know that the DCT/DST are exact when only low-frequency Fourier modes are present. Thus a large discrepency between the loss evaluated on a training and validation set, which employ both distinct integration points and distinct cutoff frequencies, implies the presence of high-order Fourier modes in the residual. The smoothing effect of the PDE solution operator, however, mitigates the influence of the high-order modes in the residual on the $H^1$-error. Whilst we obtain a good solution, without having the exact solution at hand, it would not be clear if this overfitting is problematic or not without resorting to some other method to attempt to quantify the error. Furthermore, if training had been stopped when this overfitting began to develop, as is traditionally done, we would obtain a solution with relative $H^1$-error close to $100\%$. Due to this, we conclude that, despite $\mathcal{L}_{\vpinn}$ being a well-defined loss for PDEs in weak form, it is inappropriate to use when the weak and strong forms are non-equivalent as one cannot relate the loss on training/validation sets to errors in a clear way, just as $\mathcal{L}_{col}$ would be inappropriate in the same situation. This shows the advantage of employing the DFR method in problems where solutions admit only $H^1$ regularity, making the $H^{-1}$-norm of the residual the appropriate loss function to be minimised. 

%
%

\subsection{Further Results}\label{secExperiments}
We have seen in Model Problem 3 that a validation set is necessary, as we can identify overfitting via a divergence in the loss evaluated on the training and validation sets. For the following examples, inspired by this, we implement an {\it EarlyStopping} callback to stop training when the loss evaluated on the validation set does not show improvement during 200 iterations and restores the best NN parameters according to the best obtained value of the loss evaluated on the validation set. In the following, we perform $10^5$ iterations, or until the {\it EarlyStopping} halts training. With only these exceptions, we consider the same architectures and optimisation proceedures as before.

\subsubsection{Model Problem 4: Point source}\label{ModelProblemDelta}

We take $V=H^1_0(0,\pi)$ and aim to find $u\in V$ such that 
\begin{equation}
\int_0^\pi u'(x)v'(x)\,dx -v\left(\frac{\pi}{2}\right)=0
\end{equation}
for all $v\in V$. This has a unique solution given by 
\begin{equation}
u^*(x)=\frac{\pi}{2}-\left|x-\frac{\pi}{2}\right|.
\end{equation}
The forcing term, given by a Dirac delta function, is in $V^*$ but not expressable as an $L^2$ function. In particular, it would be impossible to solve this equation using classical PINN methods.

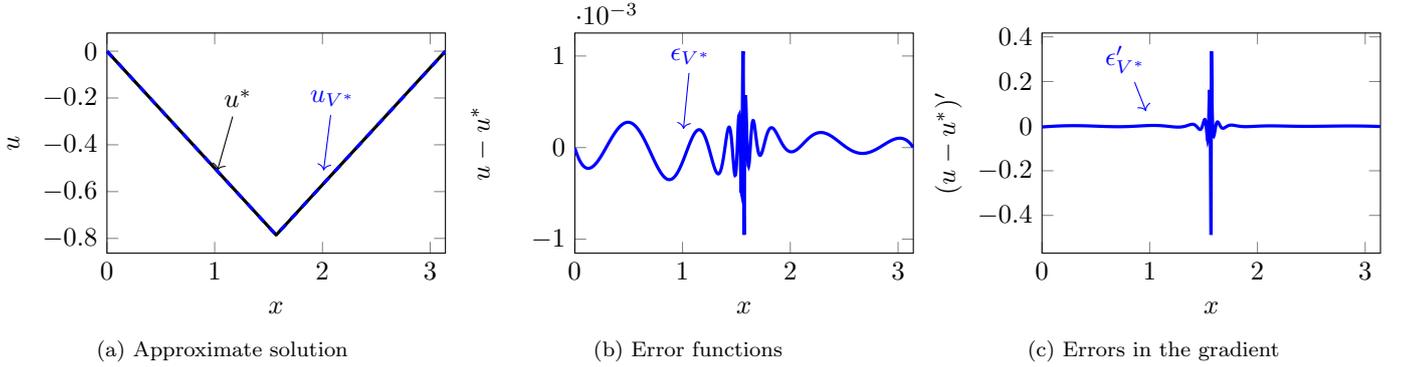
\begin{figure}[H]
\begin{center}\centerline{\begin{minipage}{1.2\textwidth}
\begin{subfigure}[b]{0.33\textwidth}
\begin{center}
\begin{tikzpicture}
\begin{axis}[
  xlabel=$x$,
  ylabel=$u$,
xmin = 0,
xmax=3.1416,
height=4.5cm,
width=\textwidth]
\addplot [mark=none,very thick] table [y=b, x=a, col sep = comma]{Graphics/Data/Delta/SolExact.csv};
\node[anchor=west] (source) at (axis cs:1.0,-0.2){$u^*$};
       \node (destination) at (axis cs:1.,-0.55){};
       \draw[->](source)--(destination);
\addplot [mark=none, blue, very thick,dashed ]table [y=b, x=a, col sep = comma]{Graphics/Data/Delta/SolDual.csv};
\node[anchor=west] (source) at (axis cs:1.8,-0.2){\color{blue}$u_{V^*}$};
       \node (destination) at (axis cs:2,-0.55){};
       \draw[blue, ->](source)--(destination);

\end{axis}
\end{tikzpicture}
\caption{ Approximate solution}
\end{center}
\end{subfigure}
\begin{subfigure}[b]{0.33\textwidth}
\begin{center}
\begin{tikzpicture}
\begin{axis}[
  xlabel=$x$,
  ylabel=$u-u^*$,
xmin = 0.001,
xmax=3.1414,
height=4.5cm,
width=\textwidth]
\addplot [mark=none, blue, very thick ]table [y=b, x=a, col sep = comma]{Graphics/Data/Delta/ErDual.csv};
\node[anchor=west] (source) at (axis cs:0.8,10e-4){\color{blue}$\epsilon_{V^*}$};
       \node (destination) at (axis cs:1,10e-5){};
       \draw[blue, ->](source)--(destination);

\end{axis}
\end{tikzpicture}
\caption{Error functions}
\end{center}
\end{subfigure}
\begin{subfigure}[b]{0.33\textwidth}
\begin{center}
\begin{tikzpicture}
\begin{axis}[
  xlabel=$x$,
  ylabel=$(u-u^*)'$,
xmin = 0,
xmax=3.1416,
height=4.5cm,
width=\textwidth]
\addplot [mark=none, blue, very thick ]table [y=b, x=a, col sep = comma]{Graphics/Data/Delta/GradErDual.csv};
\node[anchor=west] (source) at (axis cs:0.5,0.3){\color{blue}$\epsilon_{V^*}'$};
       \node (destination) at (axis cs:1,0.025){};
       \draw[blue, ->](source)--(destination);
\end{axis}
\end{tikzpicture}
\caption{Errors in the gradient}
\end{center}
\end{subfigure}
\end{minipage}}
\end{center}
\caption{ Model Problem 4. Obtained solution and error}
\label{figMP5sols}
\end{figure}

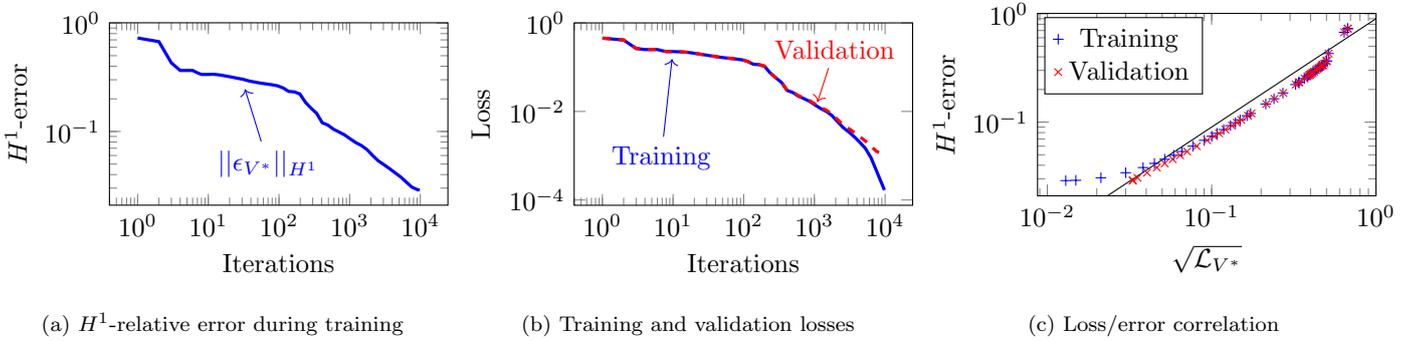
\begin{figure}[H]\centerline{\begin{minipage}{1.2\textwidth}
\begin{subfigure}[b]{0.33\textwidth}
\begin{center}
\begin{tikzpicture}
\begin{axis}[
  xlabel=Iterations,
  ylabel=$H^1$-error,
height=4cm,
width=\textwidth,
xmode=log,
ymode=log]
\addplot [mark=none, blue, very thick ]table [y expr=sqrt(\thisrow{c}),x=a, col sep = comma]{Graphics/Data/Delta/LossH1Dual.csv};
\node[anchor=west] (source) at (axis cs:10e0,0.5*10e-2){\color{blue}$||\epsilon_{V^*}||_{H^1}$};
       \node (destination) at (axis cs:3*10e0,3*10e-2){};
       \draw[blue, ->](source)--(destination);

\end{axis}
\end{tikzpicture}
\end{center}
\caption{$H^1$-relative error during training}\label{subfigMP5h1evolve}
\end{subfigure}
\begin{subfigure}[b]{0.33\textwidth}
\begin{center}
\begin{tikzpicture}
\begin{axis}[
  xlabel=Iterations,
  ylabel=Loss,
height=4cm,
width=\textwidth,
xmode=log,
ymode=log]
\addplot [very thick, mark=none,blue]table [y expr=\thisrow{b},x =a, col sep = comma]{Graphics/Data/Delta/LossH1Dual.csv};
\node[anchor=west] (source) at (axis cs:10e-1,9*10e-5){\color{blue}Training};
       \node (destination) at (axis cs:10e0,0.25*10e-1){};
       \draw[blue, ->](source)--(destination);
\addplot[very thick, mark=none,red,dashed ]table [y expr=\thisrow{b},x =a, col sep = comma]{Graphics/Data/Delta/ValH1Dual.csv};
\node[anchor=west] (source) at (axis cs:2*10e1,25*10e-3){\color{red}Validation};
       \node (destination) at (axis cs:10e2,10e-3){};
       \draw[red, ->](source)--(destination);

\end{axis}
\end{tikzpicture}
\end{center}\caption{Training and validation losses}\label{subfigMP5lossesEvolve}
\end{subfigure}
\begin{subfigure}[b]{0.33\textwidth}
\begin{center}
\begin{tikzpicture}
\begin{axis}[
  xlabel=$\sqrt{\mathcal{L}_{V^*}}$,
  ylabel=$H^1$-error,
height=4cm,
width=\textwidth,
xmode=log,
ymode=log,
legend style={at={(0.02,0.98)},anchor=north west}]
\addplot [only marks, blue, mark size=2pt, mark=+]table [y expr=sqrt(\thisrow{c}),x expr=sqrt(\thisrow{b}), col sep = comma]{Graphics/Data/Delta/LossH1Dual.csv};
\addlegendentry{Training}
\addplot[only marks, red, mark size=2pt, mark=x]table [y expr=sqrt(\thisrow{c}), ,x expr=sqrt(\thisrow{b}), col sep = comma]{Graphics/Data/Delta/ValH1Dual.csv};
\addlegendentry{Validation}
\node (s1) at (axis cs:10e-5,0.9*10e-5){};
\node (s2) at (axis cs:10e2,0.9*10e2){};
\draw (s1)--(s2);
\end{axis}
\end{tikzpicture}
\end{center}\caption{Loss/error correlation}
\label{subfigMP5correl}
\end{subfigure}\end{minipage}}
\caption{Model Problem 4: Loss and relative $H^1$-error during training}
\label{figMP5Losses}
\end{figure}

We see qualitatively in \Cref{figMP5sols} that we approximate well the exact solution, with absolute pointwise errors remaining of order $10^{-3}$. The exact solution is not $C^1$, and its derivative admits a jump discontinuity at $x=\frac{\pi}{2}$, thus we observe a Gibbs phenomenon-like error in the gradient of our obtained solution, which is to be expected as we are approximating with smooth trial functions. 

With our DFR method, we obtain a relative $L^2$ error of $0.039 \%$, and relative $H^1$-error of $3.60\%$. {\it EarlyStopping} halted training at $9610$ iterations. \Cref{subfigMP5lossesEvolve} shows that that overfitting develops towards the end of the training process. When this overfitting occurs, in \Cref{subfigMP5correl} we observe that the relative $H^1$-error has a sublinear dependency on the training loss; however, the square root of the validation loss and $H^1$ relative error exhibit a strong linear correlation. In particular, we see that at the point where training was halted by the {\it EarlyStopping} callback, the $H^1$-error had reached a plateau.

\subsubsection{Model Problem 5: Nonlinear}\label{ModelProblemNonlinear}
We take $V=H^1_0(0,\pi)$, and aim to find $u\in V$ such that 
\begin{equation}\begin{split}
&\int_0^\pi \left(u'(x)+\frac{1}{2}\sin(u'(x))\right)v'(x)+f(x)v(x)+u(x)v(x)+u(x)^3v(x)\,dx =0
\end{split}
\end{equation}
for all $v\in V$, where $f$ is obtained via the manufactured solution 
\begin{equation}
u^*(x)=5x\left(x-\frac{\pi}{2}\right)\tanh\left(5\left(x-\pi\right)\right).
\end{equation}
This problem admits a unique solution as it corresponds to the Euler-Lagrange equation of the strictly convex integral functional given by 
\begin{equation}
\mathcal{F}(u)=\int_0^\pi \frac{1}{2}|u'(x)|^2-\frac{1}{2}\cos(u'(x))+f(x)u(x)+\frac{1}{2}u(x)^2+\frac{1}{4}u(x)^4\,dx .
\end{equation}

The ODE is nonlinear, and thus the classical error estimate \eqref{eqUpLowerBd} does not directly apply. However, as commented in \Cref{remarkNonlinearity}, for a candidate solution close to the exact solution, the equation can be interpreted as a small perturbation of a linear problem. Consequently, we expect to see a linear regime towards the end of the training. 

\begin{figure}[H]
\begin{center}\centerline{\begin{minipage}{1.2\textwidth}
\begin{subfigure}[b]{0.33\textwidth}
\begin{center}
\begin{tikzpicture}
\begin{axis}[
  xlabel=$x$,
  ylabel=$u$,
xmin = 0,
xmax=3.1416,
height=4.5cm,
width=\textwidth]
\addplot [mark=none,very thick] table [y=b, x=a, col sep = comma]{Graphics/Data/Nonlinear3/SolExact.csv};
\node[anchor=west] (source) at (axis cs:1.0,-0.2){$u^*$};
       \node (destination) at (axis cs:1.,9){};
       \draw[->](source)--(destination);
\addplot [mark=none, blue, very thick,dashed ]table [y=b, x=a, col sep = comma]{Graphics/Data/Nonlinear3/SolDual.csv};
\node[anchor=west] (source) at (axis cs:1.8,-0.2){\color{blue}$u_{V^*}$};
       \node (destination) at (axis cs:2,-7){};
       \draw[blue, ->](source)--(destination);

\end{axis}
\end{tikzpicture}
\caption{ Approximate solution}
\end{center}
\end{subfigure}
\begin{subfigure}[b]{0.33\textwidth}
\begin{center}
\begin{tikzpicture}
\begin{axis}[
  xlabel=$x$,
  ylabel=$u-u^*$,
xmin = 0.001,
xmax=3.1414,
height=4.5cm,
width=\textwidth]
\addplot [mark=none, blue, very thick ]table [y=b, x=a, col sep = comma]{Graphics/Data/Nonlinear3/ErDual.csv};
\node[anchor=west] (source) at (axis cs:0.8,-5*10e-4){\color{blue}$\epsilon_{V^*}$};
       \node (destination) at (axis cs:1,10e-5){};
       \draw[blue, ->](source)--(destination);

\end{axis}
\end{tikzpicture}
\caption{Error functions}
\end{center}
\end{subfigure}
\begin{subfigure}[b]{0.33\textwidth}
\begin{center}
\begin{tikzpicture}
\begin{axis}[
  xlabel=$x$,
  ylabel=$(u-u^*)'$,
xmin = 0,
xmax=3.1416,
height=4.5cm,
width=\textwidth]
\addplot [mark=none, blue, very thick ]table [y=b, x=a, col sep = comma]{Graphics/Data/Nonlinear3/GradErDual.csv};
\node[anchor=west] (source) at (axis cs:0.5,0.05){\color{blue}$\epsilon_{V^*}'$};
       \node (destination) at (axis cs:1,0.2*0.025){};
       \draw[blue, ->](source)--(destination);
\end{axis}
\end{tikzpicture}
\caption{Errors in the gradient}
\end{center}
\end{subfigure}
\end{minipage}}
\caption{ Model Problem 5. Obtained solution and error}
\label{figMP6Sols}
\end{center}
\end{figure}
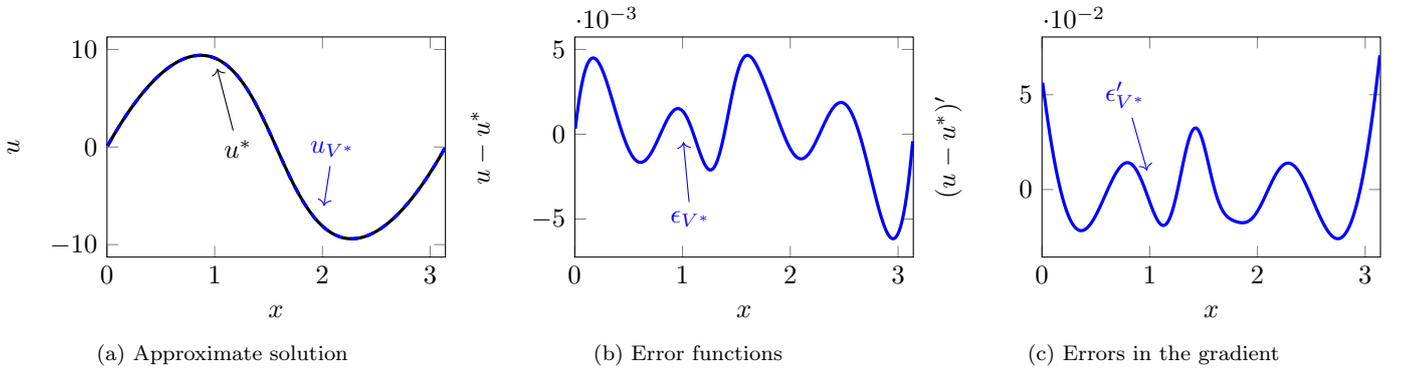

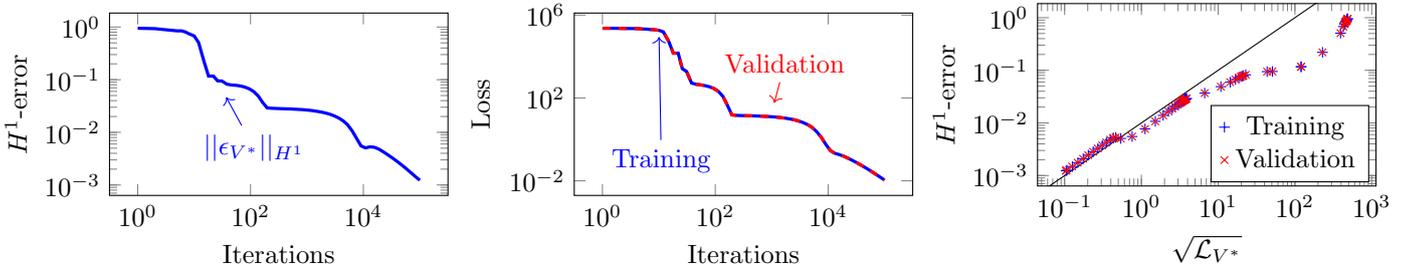
\begin{figure}[H]\centerline{\begin{minipage}{1.2\textwidth}
\begin{subfigure}[b]{0.33\textwidth}
\begin{center}
\begin{tikzpicture}
\begin{axis}[
  xlabel=Iterations,
  ylabel=$H^1$-error,
height=4cm,
width=\textwidth,
xmode=log,
ymode=log]
\addplot [mark=none, blue, very thick ]table [y expr=sqrt(\thisrow{c}),x=a, col sep = comma]{Graphics/Data/Nonlinear3/LossH1Dual.csv};
\node[anchor=west] (source) at (axis cs:10e0,5*10e-4){\color{blue}$||\epsilon_{V^*}||_{H^1}$};
       \node (destination) at (axis cs:3*10e0,7*10e-3){};
       \draw[blue, ->](source)--(destination);

\end{axis}
\end{tikzpicture}
\end{center}
\end{subfigure}
\begin{subfigure}[b]{0.33\textwidth}
\begin{center}
\begin{tikzpicture}
\begin{axis}[
  xlabel=Iterations,
  ylabel=Loss,
height=4cm,
width=\textwidth,
xmode=log,
ymode=log]
\addplot [very thick, mark=none,blue]table [y expr=\thisrow{b},x =a, col sep = comma]{Graphics/Data/Nonlinear3/LossH1Dual.csv};
\node[anchor=west] (source) at (axis cs:10e-1,9*10e-3){\color{blue}Training};
       \node (destination) at (axis cs:10e0,0.25*10e5){};
       \draw[blue, ->](source)--(destination);
\addplot[very thick, mark=none,red,dashed ]table [y expr=\thisrow{b},x =a, col sep = comma]{Graphics/Data/Nonlinear3/ValH1Dual.csv};
\node[anchor=west] (source) at (axis cs:10e1,5*9*10e1){\color{red}Validation};
       \node (destination) at (axis cs:10e2,4*5*10e-1){};
       \draw[red, ->](source)--(destination);

\end{axis}
\end{tikzpicture}
\end{center}
\end{subfigure}
\begin{subfigure}[b]{0.33\textwidth}
\begin{center}
\begin{tikzpicture}
\begin{axis}[
  xlabel=$\sqrt{\mathcal{L}_{V^*}}$,
  ylabel=$H^1$-error,
height=4cm,
width=\textwidth,
xmode=log,
ymode=log,
legend style={at={(0.98,0.02)},anchor=south east}]
\addplot [only marks, blue, mark size=2pt, mark=+]table [y expr=sqrt(\thisrow{c}),x expr=sqrt(\thisrow{b}), col sep = comma]{Graphics/Data/Nonlinear3/LossH1Dual.csv};
\addlegendentry{Training}
\addplot[only marks, red, mark size=2pt, mark=x]table [y expr=sqrt(\thisrow{c}), ,x expr=sqrt(\thisrow{b}), col sep = comma]{Graphics/Data/Nonlinear3/ValH1Dual.csv};
\addlegendentry{Validation}
\node (s1) at (axis cs:10e-5,10e-5/100){};
\node (s2) at (axis cs:10e2,10e2/100){};
\draw (s1)--(s2);
\end{axis}
\end{tikzpicture}
\end{center}
\end{subfigure}\end{minipage}}
\caption{Model Problem 5: Loss and relative $H^1$-error during training}
\label{figMP6Losses}
\end{figure}

After $10^5$ iterations, we obtain a relative $H^1$-error of $0.117\%$ and relative $L^2$ error of $0.036\%$. \Cref{figMP6Sols} shows that we have a good approximation of the exact solution, and the pointwise error is of order $10^{-3}$, and pointwise error in the gradient is of order $10^{-2}$. In \Cref{figMP6Losses} we observe that in early training we have a non-linear and slightly non-monotonic relationship between the square root of the loss and $H^1$-error; however, once we reach a relative error of around $10^{-2}$, we recover a linear regime with proportional dependence between the two metrics in accordance with the theory.

\subsubsection{Model Problem 6 - Discontinuous parameters in 2D}\label{ModelProblem2D}

Let $\Omega=[0,\pi]\times[0,\pi]$. We take $\Gamma_D$ to be three edges of $\partial\Omega$ corresponding to $x_1=0,\pi$ and $x_2=0$, and $\Gamma_N$ the edge corresponding to $x_2=\pi$. We aim to find the weak solution $u\in V$ to the equation 
\begin{equation}
\int_\Omega\sigma(x)\nabla u(x)\cdot\nabla v(x)+f(x)v(x)\,dx-\int_{0}^\pi v(x_1,\pi)\pi(x_1-\pi)x_1(1-\pi)\,dx_1 =0
\end{equation}
for all $v\in V$, where 
\begin{equation}
\begin{split}
\sigma(x)=&\left\{\begin{array}{c c}
2 & \left|x-\left(\frac{\pi}{2},\frac{\pi}{2}\right)\right|<1\\
1& \left|x-\left(\frac{\pi}{2},\frac{\pi}{2}\right)\right|\geq 1\\
\end{array}\right.,\\
f(x)=&\Delta\left(\left(x_1-\pi\right)\left(x_2-\pi\right)x_1x_2\left(1-\left|x-\left(\frac{\pi}{2},\frac{\pi}{2}\right)\right|^2\right)\right).
\end{split}
\end{equation}
The exact solution is given by 
\begin{equation}
u^*(x)=\frac{1}{\sigma(x)}\left(x_1-\pi\right)\left(x_2-\pi\right)x_1x_2\left(1-\left|x-\left(\frac{\pi}{2},\frac{\pi}{2}\right)\right|^2\right).
\end{equation}

We use an NN basis of five hidden layers each containing ten neurons and tanh activation function. 200x200 points are used for integration in the training loss, and 274x274 for validation. We have an initial learning rate of $10^{-2}$ with Adam, and run for $10^5$ iterations

\begin{figure}
\begin{subfigure}[b]{0.45\textwidth}\begin{center}
\includegraphics[width=\textwidth]{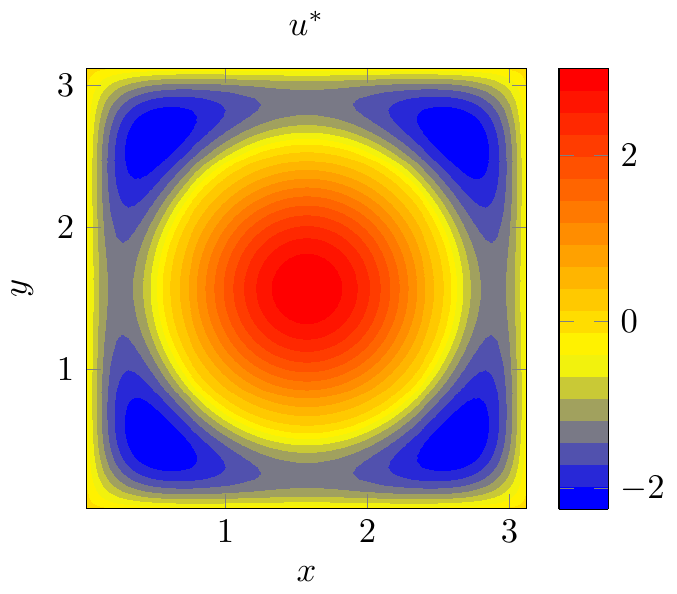}
\end{center}
\end{subfigure}
\begin{subfigure}[b]{0.45\textwidth}\begin{center}
\includegraphics[width=\textwidth]{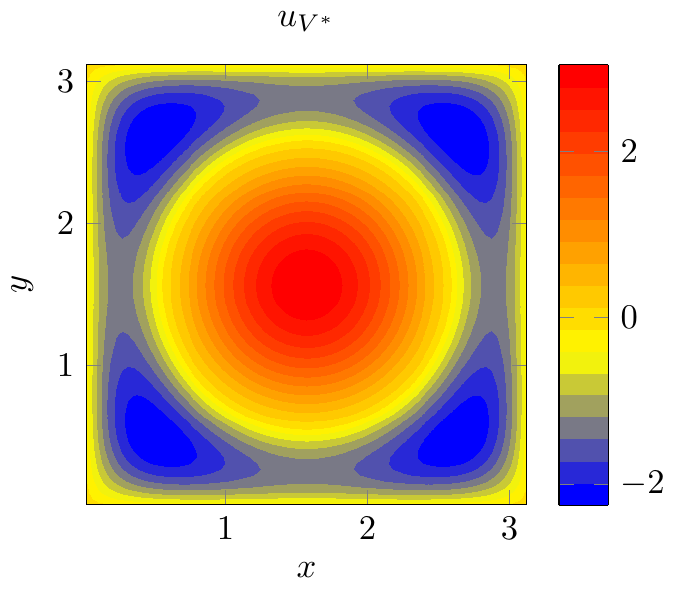}
\end{center}
\end{subfigure}

\begin{subfigure}[b]{0.45\textwidth}
\begin{center}
\includegraphics[width=\textwidth]{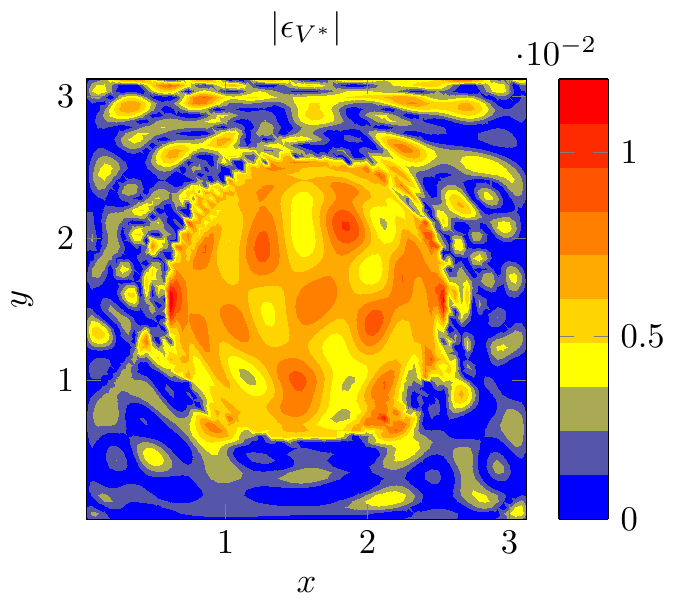}
\end{center}
\end{subfigure}
\begin{subfigure}[b]{0.45\textwidth}
\begin{center}\begin{minipage}[b]{0.95\textwidth}
\includegraphics[width=\textwidth]{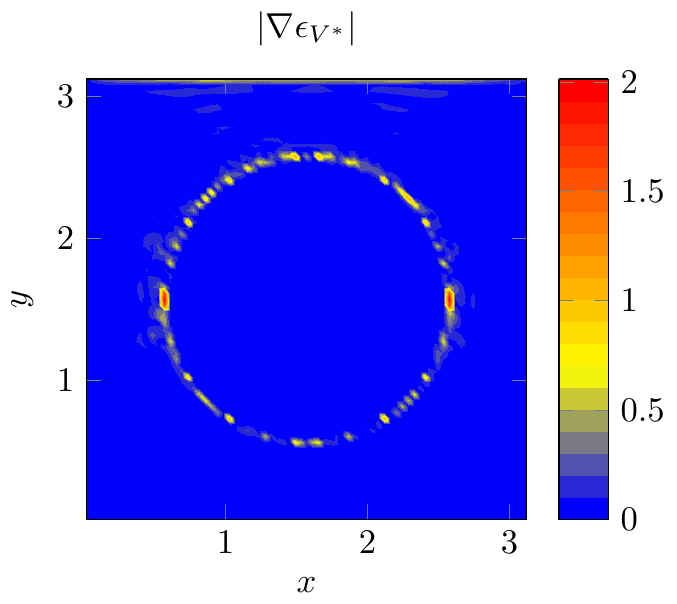}\end{minipage}
\end{center}
\end{subfigure}
\caption{Model Problem 6: Obtained solution and error}
\label{figMP4Sols}
\end{figure}

\begin{figure}[H]\centerline{\begin{minipage}{1.2\textwidth}
\begin{subfigure}[b]{0.33\textwidth}
\begin{center}
\begin{tikzpicture}
\begin{axis}[
  xlabel=Iterations,
  ylabel=$H^1$-error,
height=4cm,
width=\textwidth,
xmode=log,
ymode=log]
\addplot [mark=none, blue, very thick ]table [y expr=sqrt(\thisrow{c}),x=a, col sep = comma]{Graphics/Data/DiscSigma2d2/LossH1Dual.csv};
\node[anchor=west] (source) at (axis cs:10e-1,10e-2){\color{blue}$||\epsilon_{V^*}||_{H^1}$};
       \node (destination) at (axis cs:9*10e0,3*10e-2){};
       \draw[blue, ->](source)--(destination);

\end{axis}
\end{tikzpicture}
\end{center}
\caption{$H^1$-relative error during training}
\label{subfigMP4ErrorEvolve}
\end{subfigure}
\begin{subfigure}[b]{0.33\textwidth}
\begin{center}
\begin{tikzpicture}
\begin{axis}[
  xlabel=Iterations,
  ylabel=Loss,
height=4cm,
width=\textwidth,
xmode=log,
ymode=log]
\addplot [very thick, mark=none,blue]table [y expr=\thisrow{b},x =a, col sep = comma]{Graphics/Data/DiscSigma2d2/LossH1Dual.csv};
\node[anchor=west] (source) at (axis cs:7*10e1,1.44*10e1){\color{blue}Training};
       \node (destination) at (axis cs:0.8*10e2,0.09*10e1){};
       \draw[blue, ->](source)--(destination);
\addplot[very thick, mark=none,red,dashed ]table [y expr=\thisrow{b},x =a, col sep = comma]{Graphics/Data/DiscSigma2d2/ValH1Dual.csv};
\node[anchor=west] (source) at (axis cs:2*10e-1,10e-1){\color{red}Validation};
       \node (destination) at (axis cs:0.5*10e2,0.25*10e1){};
       \draw[red, ->](source)--(destination);

\end{axis}
\end{tikzpicture}
\end{center}
\caption{Training and validation losses}
\label{subfigMP4LossEvolve}
\end{subfigure}
\begin{subfigure}[b]{0.33\textwidth}
\begin{center}
\begin{tikzpicture}
\begin{axis}[
  xlabel=$\sqrt{\mathcal{L}_{V^*}}$,
  ylabel=$H^1$-error,
height=4cm,
width=\textwidth,
xmode=log,
ymode=log,
legend style={at={(0.02,0.98)},anchor=north west}]
\addplot [only marks, blue, mark size=2pt, mark=+]table [y expr=sqrt(\thisrow{c}),x expr=sqrt(\thisrow{b}), col sep = comma]{Graphics/Data/DiscSigma2d2/LossH1Dual.csv};
\addlegendentry{Training}
\addplot[only marks, red, mark size=2pt, mark=x]table [y expr=sqrt(\thisrow{c}), ,x expr=sqrt(\thisrow{b}), col sep = comma]{Graphics/Data/DiscSigma2d2/ValH1Dual.csv};
\addlegendentry{Validation}
\node (s1) at (axis cs:10e-5,0.035*10e-5){};
\node (s2) at (axis cs:10e2,0.035*10e2){};
\draw (s1)--(s2);
\end{axis}
\end{tikzpicture}
\end{center}\caption{Correlation between loss and error}\label{subfigMP4LossError}
\end{subfigure}\end{minipage}}
\caption{Model Problem 6: Loss and relative $H^1$-error during training}
\label{figMP4Losses}
\end{figure}
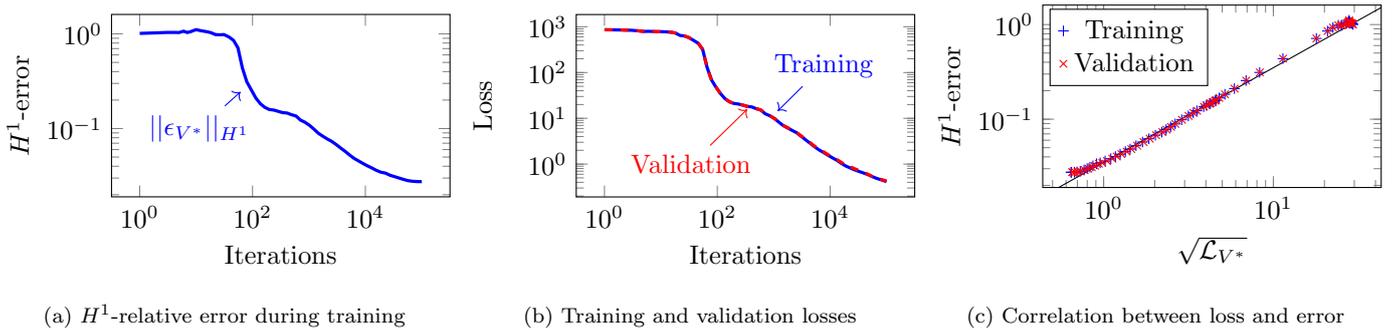
\Cref{figMP4Sols} shows that the method produces an accurate solution, with absolute pointwise errors remaining of the order $10^{-2}$. By numerical integration we observe a relative $H^1$-error of $2.7\%$ at the end of training. We observe more significant errors in $\nabla u_{V^*}$ near the ring of discontinuity in $\sigma$ and $\nabla u^*$, which is to be expected as we are approximating discontinuous functions with smooth functions. Outside of this ring-shaped region, however, the approximation of the gradient is generally good. \Cref{subfigMP4ErrorEvolve} shows a monotonic decay of the $H^1$-error during training, and \Cref{subfigMP4LossEvolve} shows that there is no overfitting present. Finally, \Cref{subfigMP4LossError} once again shows a linear relationship between the square root of the loss and the $H^1$-error.

\section{Conclusions}\label{secConc}

There are a wide class of PDEs in weak form, using $H^{1}$ as their space of test functions, such that the $H^{1}$-error of solutions can be controlled by the $H^{-1}$-norm of the PDE residual, as outlined in \Cref{propNonlinearEstimate}. We have developed a framework for implementing the $H^{-1}$ norm as a loss function to solve PDEs using NNs, which is numerically implemented via a spectral decomposition of the residual using DCT/DST to improve efficiency. We have numerically demostrated that in problems with sufficiently regular solutions, the method is comparable to the collocation and VPINNs methods; however, it shows a strong advantage when solutions lack $H^2$ regularity, in particular, when the PDE contains discontinuous material parameters or point sources. One may also use the proposed loss as a metric to assess the quality of approximate solutions, even if it is unused for optimisation.

In the absence of overfitting, we observe strong correlations between the training loss and $H^1$-error of candidate solutions. Moreover, overfitting is identified in our examples when divergence between the loss evaluated on a training and validation set occurs. This provides a strong advatange over the PINN and VPINN losses, which are inappropriate to use when solutions admit low regularity and may lead to erroneous results.

The DFR has several limitations that open the possibility for future research directions. First, our method suffers from the curse of dimensionality as one must perform DCT/DST in each coordinate direction. It may be possible to overcome this issue in higher dimensions by choosing more appropriate basis functions rather than tensor products of 1D basis sets. Second, our use of DCT/DST to numerically evaluate the dual norm naturally restricts our method to rectangular domains with appropriate boundary conditions on each face/edge. In arbitrary domains, one would need to find alternative basis functions and quadrature rules to numerically approximate the dual norm, which would be dependent on the particular geometry. Finally, our method approximates the $H^{-1}$ norm, which in certain PDEs such as the high-frequency Helmholtz equation, falls short at controlling the energy-norm error. To overcome this, one would need to find an appropriate basis to estimate the correct norm on the dual space via the series expansion \eqref{eqDualNormSeries}.

\section{Acknowledgements}
Jamie M. Taylor is supported by the Basque Government through the BERC 2018-2021 program and by the Spanish State Research Agency through BCAM Severo Ochoa excellence accreditation SEV-2017-0718 and through project PID2020-114189RB-I00 funded by Agencia Estatal de Investigaci\'on (PID2020-114189RB-I00 / AEI / 10.13039/501100011033).
David Pardo and Ignacio Muga have received funding from: the European Union's Horizon 2020 research and innovation program under the Marie Sklodowska-Curie grant agreement No 777778 (MATHROCKS). David Pardo has received funding from: the Spanish Ministry of Science and Innovation projects with references TED2021-132783B-I00, PID2019-108111RB-I00 (FEDER/AEI) and PDC2021-121093-I00 (AEI/Next Generation EU), the “BCAM Severo Ochoa” accreditation of excellence (SEV-2017-0718); and the Basque Government through the BERC 2022-2025 program, the three Elkartek projects 3KIA (KK-2020/00049), EXPERTIA (KK-2021/00048), and SIGZE (KK-2021/00095), and the Consolidated Research Group MATHMODE (IT1456-22) given by the Department of Education

\bibliography{bib}
\appendix

\section{The Laplacian basis}
\label{appLaplacian}

The following results are classical, with more detailed discussion available, for example, in \cite[Chapter 6]{brezis2011functional} or \cite{davies1996spectral}. In particular, Corollary 4.2.3 and Theorems 4.5.1 and 6.3.1. We include this discussion in a relatively self-contained framework for completeness.

Let $\Omega\subset\mathbb{R}^N$ be a bounded domain, with $\Gamma_D,\Gamma_N$ disjoint subsets of $\partial\Omega$ such that $\partial\Omega=\overline{\Gamma_D}\cup\overline{\Gamma_N}$. Take $V$ to be the space $V=\{v\in H^1(\Omega):v|_{\Gamma_D}=0\}$. We consider an orthogonal basis for $V$ given by the eigenvectors of the operator $1-\Delta$ on $V$ with boundary condition $\frac{\partial u}{\partial \nu}=0$ on $\Gamma_N$, that is, a homogeneous Neumann condition on $\Gamma_N$ and homogeneous Dirichlet condition on $\Gamma_D$. 

First, we show that such a basis exists. We first define the solution operator $T:L^2(\Omega)\to L^2(\Omega)$ to be the operator taking $f\in L^2(\Omega)$ to the unique solution $u\in V$ of  
\begin{equation}\label{eqWeakFormLap}
\int_\Omega \nabla u\cdot\nabla v +uv\,dx=\int_\Omega fv\,dx.
\end{equation}
for all $v\in V$. We remark that this is equivalent to $\langle Tf,v\rangle_{H^1}=\langle f,v\rangle_{L^2}$ for all $v\in V$. $T$ is a symmetric and positive definite linear map: For any $f_1,f_2\in L^2(\Omega)$, from the weak-formulation \eqref{eqWeakFormLap}, as $Tf_1,Tf_2\in V$, we have that 
\begin{equation}\label{eqInvLapIdentity}
\langle Tf_1,f_2\rangle_{L^2}=\langle Tf_1,Tf_2\rangle_{H^1}=\langle f_1,Tf_2\rangle_{L^2}.
\end{equation}
 Furthermore, by the classical Lax-Migram result, we have that $||Tf||_{H^1}\leq ||f||_{L^2}$. In particular, we have that $T$ is a compact symmetric operator from $L^2(\Omega)$ to itself, and thus, by classical spectral theory, this implies that $T$ admits a decreasing countable sequence of positive eigenvalues that converges monotonically to zero. For notational convenience, we consider their inverses, so that $T$ admits eigenvalues $\lambda_k^{-1}>0$ where $\lambda_k$ is a positive, monotonically increasing sequence with $\lambda_k^{-1}\to\infty$. The eigenvalues have corresponding eigenvectors $\varphi_k$, where $(\varphi_k)_{k=1}^\infty$ forms an orthonormal basis of $L^2(\Omega)$. The weak formulation \eqref{eqInvLapIdentity} implies that for any $v\in V$, 
\begin{equation}\label{eqEVSH1toL2}
\lambda_k^{-1}\langle \varphi_k,v\rangle_{H^1}=\langle T\varphi_k,v\rangle_{H^1}=\langle \varphi_k,v\rangle_{L^2}
\end{equation} 
In particular, as $\lambda_k\neq 0$, the $L^2$-orthogonality of the sequence $(\varphi_k)_{k=1}^\infty$ also implies $H^1$-orthogonality. By taking $v=\varphi_k$ in \eqref{eqEVSH1toL2}, we also see that $||\varphi_k||_{H^1}^2=\lambda_k$.

We show by contradiction that $\varphi_k$ also forms a {\it basis} of $V$, and not just an orthogonal set. If $\varphi_k$ were not a basis, there would exist some $v\in V\setminus \{0\}$ such that $\langle\varphi_k,v\rangle_{H^1}=0$ for all $k$. In light of \eqref{eqEVSH1toL2}, we must therefore have that $\langle \varphi_k,v\rangle_{L^2}=0$ for all $k$. This contradicts that $(\varphi_k)_{k=1}^\infty$ is a basis for $L^2(\Omega)$.

\section{Estimation via Fast (Co)Sine Transforms}
\label{appFFT}

Discrete Sine/Cosine Transforms (DST/DCT) are efficient methods, based on the Fast Fourier Transform (FFT), to decompose a finite input vector of dimension $N$ into $N$ sine or cosine waves with given boundary conditions. There are numerous variations corresponding to different boundary conditions, and within this work we focus on the type-II and type-IV transforms \cite[Section 4.2]{britanak2010discrete}. We use the notation DST-II, DST-IV to refer to the type-II and type-IV DST, respectively, and DCT-II and DCT-IV to refer to the type-II and type-IV DCT, respectively, which is employed in the summary of basis functions in \Cref{tableBases}.

As the DST/DCT are linear operations between two $N$-dimensional vector spaces, each may be represented by an $N\times N$ matrix. We represent the type-II and type-IV DST via matrices $S^{II}$ and $S^{IV}$, respectively, and similarly the type-II and type-IV DCT via $C^{II}$ and $C^{IV}$. Each matrix is indexed by $k,n=0,...,N-1$. For an integrable function $g$, we may approximate integrals via the following four relationships:
\begin{equation}\label{eqFFTs}
\begin{split}
\int_0^{\pi} g(x)\sin(kx)\,dx \approx \,&\mathcal{S}_{N,k}^{II}(g):=\sum_{n=0}^{N-1} \frac{\pi}{\sqrt{2N}}S^{II}_{k-1,n}g\left(\frac{2n+1}{2N}\pi\right),\\
\int_0^\pi g(x)\sin\left(\left(k-\frac{1}{2}\right)x\right)\,dx\approx &\,\mathcal{S}^{IV}_{N,k}(g):=\sum\limits_{n=0}^{N-1}\frac{\pi}{\sqrt{2N}}S^{IV}_{k-1,n}g\left(\frac{2n+1}{2N}\pi\right),\\
\int_0^{\pi} g(x)\cos(kx)\,dx \approx \,&\mathcal{C}^{II}_{N,k}(g):=\sum_{n=0}^{N-1} \frac{\pi}{\sqrt{2N}}C^{II}_{k-1,n}g\left(\frac{2n+1}{2N}\pi\right),\\
\int_0^\pi g(x)\cos\left(\left(k-\frac{1}{2}\right)x\right)\,dx\approx &\,\mathcal{C}^{IV}_{N,k}(g):=\sum\limits_{n=0}^{N-1}\frac{\pi}{\sqrt{2N}}C^{IV}_{k-1,n}g\left(\frac{2n+1}{2N}\pi\right).\\
\end{split}
\end{equation}
In each case, the approximation corresponds to a mid-point integration rule, that is, 
\begin{equation}
\int_0^\pi f(x)\,dx \approx \frac{\pi}{N}\sum\limits_{n=0}^{N-1} f\left(\frac{2n+1}{2N}\pi\right).
\end{equation}
The matrices in the transformations are defined by:
\begin{equation}
\begin{split}
(S^{II}_N)_{kn}:=&\sqrt{\frac{2}{N}}\epsilon_k\sin\left(\frac{\pi}{N}\left(n+\frac{1}{2}\right)(k+1)\right),\\
(S^{IV}_N)_{kn}:=&\sqrt{\frac{2}{N}}\sin\left(\frac{\pi}{N}\left(n+\frac{1}{2}\right)\left(k+\frac{1}{2}\right)\right),\\
(C^{II}_N)_{kn}:=& \sqrt{\frac{2}{N}}\epsilon_k'\cos\left(\frac{\pi}{N}\left(n+\frac{1}{2}\right)k\right),\\
(C^{IV}_N)_{kn}:=& \sqrt{\frac{2}{N}}\cos\left(\frac{\pi}{N}\left(n+\frac{1}{2}\right)\left(k+\frac{1}{2}\right)\right),
\end{split}
\end{equation}
for $k,n=0,...,N-1$ and where 

\begin{equation}
\begin{split}
\epsilon_k=&\left\{\begin{array}{c c}
1 & k\neq N-1,\\
\frac{1}{\sqrt{2}} & k=N-1.
\end{array}\right. \\
\epsilon_k'=&\left\{\begin{array}{c c}
1 & k\neq 0,\\
\frac{1}{\sqrt{2}} & k=0.
\end{array}\right.\\
\end{split}
\end{equation}

For both $X=II$, $X=IV$, the matrices $C^{X}_N$ and $S^{X}_N$ are related as follows. Let $D$ denote the diagonal matrix with diagonal entries $D_{kk}=(-1)^k$ for $k=0,...,N-1$, and $J$ denote the matrix which reverses the order of a vector, so that $JY=(y_{N-1},y_{N-2},...,y_1,y_0)$. Then, 
\begin{equation}
S^{X}_N=JC^{X}_ND
\end{equation}

Under this particular normalisation, the corresponding transformation matrices are orthogonal, that is, each of them satisfies $M^T=M^{-1}$.

\section{Nonlinear equations}

\label{appNonlinear}

We now prove \Cref{propNonlinearEstimate}. We do so by considering a linearisation, at which point we may invoke results on the linear theory. The reader is directed to \cite[Chapter 7]{ciarlet2013linear} for definitions and properties related to the differentiability of functions between Banach spaces, however we include below some definitions for completeness.

Given $u,w\in U$, we define the directional derivative $\delta_w\mathcal{R}(u)\in V^*$ via 
\begin{equation}
\delta_w\mathcal{R}(u):=\lim\limits_{\tau\to 0}\frac{\mathcal{R}(u+\tau w)-\mathcal{R}(u)}{\tau},
\end{equation}
when it exists. Furthermore, we state that $\mathcal{R}$ is Gateaux differentiable at $u$ if $\delta_w\mathcal{R}(u)$ exists for all $w\in U$, and the map $U\ni w\mapsto \delta_w\mathcal{R}(u)\in V^*$ defines a continuous linear function. 

The proof of \Cref{propNonlinearEstimate} reduces to two lemmas, corresponding to the upper and lower bounds.
\begin{lemma}
For every $\epsilon>0$, there exists $\delta_1>0$ such that for all $u\in V$ with $||u-u^*||_{U}<\delta_1$, 
$$||\mathcal{R}(u)||_{V^*}\geq \frac{\gamma}{1+\epsilon}||u-u^*||_{U}.$$
\end{lemma}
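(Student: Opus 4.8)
The plan is to linearise $\mathcal{R}$ along the segment joining $u^*$ to $u$, and to use the three hypotheses of \Cref{propNonlinearEstimate} in the obvious way: hypothesis (ii) bounds $\mathcal{R}(u)$ below by the linearisation of $\mathcal{R}$ at $u^*$ applied to $u-u^*$, while hypothesis (iii) controls the linearisation error quadratically in $\|u-u^*\|_U$. Throughout, write $w:=u-u^*$, let $L>0$ be the Lipschitz constant of the Gateaux derivative on a ball $B_{r_0}(u^*)$ on which $\mathcal{R}$ is also Gateaux differentiable (shrinking $r_0$ if necessary so that both (i) and (iii) hold there), and restrict attention to $u$ with $\|w\|_U<\delta_1$ for a $\delta_1\le r_0$ to be fixed at the end; this guarantees that the whole segment $\{u^*+tw:t\in[0,1]\}$ stays inside $B_{r_0}(u^*)$. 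The case $w=0$ is trivial, so assume $w\neq 0$.

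First I would establish the integral representation
\begin{equation*}
\mathcal{R}(u)=\mathcal{R}(u^*)+\int_0^1\delta_w\mathcal{R}(u^*+tw)\,dt=\int_0^1\delta_w\mathcal{R}(u^*+tw)\,dt,
\end{equation*}
where the integral is a $V^*$-valued Riemann integral of a continuous integrand. This is proved testwise: for fixed $v\in V$ the scalar function $g(t):=\langle\mathcal{R}(u^*+tw),v\rangle_{V^*\times V}$ has derivative $g'(t)=\langle\delta_w\mathcal{R}(u^*+tw),v\rangle_{V^*\times V}$ (this is precisely the directional derivative of $\mathcal{R}$ at $u^*+tw$ in the direction $w$), and $t\mapsto\delta_w\mathcal{R}(u^*+tw)$ is Lipschitz, hence continuous, by hypothesis (iii); applying the fundamental theorem of calculus to $g$, using $\mathcal{R}(u^*)=0$, and noting that the pairing with $v$ commutes with the integral gives the identity for every $v\in V$, hence in $V^*$.

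Next I would split off the linear part and estimate. From the representation,
\begin{equation*}
\mathcal{R}(u)=\delta_w\mathcal{R}(u^*)+\int_0^1\big(\delta_w\mathcal{R}(u^*+tw)-\delta_w\mathcal{R}(u^*)\big)\,dt,
\end{equation*}
and since the Gateaux derivative is Lipschitz with constant $L$ we have $\|\delta_w\mathcal{R}(u^*+tw)-\delta_w\mathcal{R}(u^*)\|_{V^*}\le L\,\|tw\|_U\,\|w\|_U=Lt\|w\|_U^2$. Combining this with the reverse triangle inequality and hypothesis (ii),
\begin{equation*}
\|\mathcal{R}(u)\|_{V^*}\ge\|\delta_w\mathcal{R}(u^*)\|_{V^*}-\int_0^1 Lt\|w\|_U^2\,dt\ge\gamma\|w\|_U-\frac{L}{2}\|w\|_U^2=\Big(\gamma-\frac{L}{2}\|w\|_U\Big)\|w\|_U.
\end{equation*}
Finally, choosing $\delta_1:=\min\!\big\{r_0,\ \tfrac{2\gamma\epsilon}{L(1+\epsilon)}\big\}$ ensures $\gamma-\tfrac{L}{2}\|w\|_U\ge\tfrac{\gamma}{1+\epsilon}$ whenever $\|w\|_U<\delta_1$, which yields $\|\mathcal{R}(u)\|_{V^*}\ge\tfrac{\gamma}{1+\epsilon}\|w\|_U$, as claimed.

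The step I expect to require the most care is the rigorous justification of the integral representation in the merely Gateaux-differentiable, $V^*$-valued setting: identifying $g'(t)=\langle\delta_w\mathcal{R}(u^*+tw),v\rangle$ (which is immediate from the definition of the directional derivative), and verifying that the continuity of $t\mapsto\delta_w\mathcal{R}(u^*+tw)$ — needed both for the scalar fundamental theorem of calculus and for the existence of the $V^*$-valued Riemann integral — is indeed supplied by the Lipschitz hypothesis (iii). Once this representation is in place, the remaining estimates are a short computation.
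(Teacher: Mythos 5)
Your proof is correct and follows essentially the same route as the paper's: a first-order Taylor expansion of $\mathcal{R}$ about $u^*$ with the remainder controlled via the Lipschitz hypothesis on the Gateaux derivative, followed by the reverse triangle inequality and the lower bound from hypothesis (ii). The only differences are cosmetic — you justify the $V^*$-valued integral representation more explicitly (testwise against $v\in V$), and by retaining the factor $t$ in the Lipschitz bound you get the sharper remainder estimate $\tfrac{L}{2}\|w\|_U^2$ in place of the paper's $L\|w\|_U^2$, which only changes the admissible $\delta_1$ by a factor of $2$.
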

\begin{proof}
We consider $||u-u^*||_{U}<r_0$. For brevity, we denote $w=u-u^*$. By Taylor's theorem with explicit remainder, we can estimate 
\begin{equation}
\begin{split}
\left|\left|\mathcal{R}(u)-\mathcal{R}(u^*)-\delta_w\mathcal{R}(u^*)\right|\right|_{V^*}=& \left|\left|\int_0^1\delta_w\mathcal{R}(u^*+t(u-u^*))\,dt - \delta_w\mathcal{R}(u^*)\right|\right|_{V^*}\\
\leq&  \int_0^1\left|\left|\delta_w\mathcal{R}(u^*+t(u-u^*)) - \delta_w\mathcal{R}(u^*)\right|\right|_{V^*}\,dt\\
\leq & L\int_0^1||w||_U||u-u^*||_U\,dt=L||u-u^*||_U^2.
\end{split}
\end{equation}
Thus, we may define the remainder $R:V\to V^*$ via 
\begin{equation}
R(u)=\mathcal{R}(u)-\mathcal{R}(u^*)-\delta_w\mathcal{R}(u^*),
\end{equation}
which satisfies $||R(u)||_{V^*}\leq L||u-u^*||_{U}^2$. Via the reverse triangle inequality, and noting that $\mathcal{R}(u^*)=0$, we estimate 
\begin{equation}\label{eqCoerLB}
||\mathcal{R}(u)||_{V^*}=||\delta_w\mathcal{R}(u^*)-R(u)||_{V^*}\geq \Big|||\delta_v\mathcal{R}(u^*)||_{V^*}-||R(u)||_{V^*}\Big|.
\end{equation}

As $\delta_w\mathcal{R}(u^*)$ is bounded below, $||\delta_w\mathcal{R}(u^*)||_{V^*}\geq \gamma||u-u^*||_{U}$. We observe that if $$||u-u^*||_U<\frac{\gamma\epsilon}{(1+\epsilon)L},$$ where we interpret the right-hand side as $+\infty$ if $L=0$. Then $$L||u-u^*||_{U}^2\leq \frac{\gamma\epsilon}{1+\epsilon}||u-u^*||_{U}$$ and thus 
\begin{equation}\label{eqCoerLB2}
\begin{split}
 \left|||\delta_w\mathcal{R}(u^*)||_{V^*}-||R(u)||_{V^*}\right|\geq & \gamma||w||_U-\frac{\gamma\epsilon}{(1+\epsilon)}||w||_{U}= \frac{\gamma}{(1+\epsilon)}||u-u^*||_{U}. 
\end{split}
\end{equation}
By taking $\delta_1=\min\left(r_0,\frac{\gamma\epsilon}{(1+\epsilon)L}\right)$ and combining equations \eqref{eqCoerLB} and \eqref{eqCoerLB2}, the result holds. 
\end{proof}

We now turn to the upper bound. 

\begin{lemma}
For every $1>\epsilon>0$, there exists $\delta_2>0$ such that if $||u-u^*||_{U}<\delta_2$, then 
\begin{equation}\label{eqUppBoundNL}
||\mathcal{R}(u)||_{V^*}\leq \frac{M}{1-\epsilon}||u-u^*||_U. 
\end{equation}
\end{lemma}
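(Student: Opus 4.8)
The plan is to mirror the argument used for the lower bound, extracting an \emph{upper} estimate from the same Taylor expansion. First I would write $w = u - u^*$ and reuse the remainder operator $R(u) := \mathcal{R}(u) - \mathcal{R}(u^*) - \delta_w\mathcal{R}(u^*)$ introduced in the proof of the preceding lemma, together with the bound $\|R(u)\|_{V^*} \leq L\|u-u^*\|_U^2$, which holds for $\|u-u^*\|_U < r_0$ and follows verbatim from the Lipschitz continuity of the Gateaux derivative on $B_{r_0}(u^*)$ (assumption (iii) of \Cref{propNonlinearEstimate}). There is no need to rederive this estimate; it can be quoted directly.

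Next, since $\mathcal{R}$ is Gateaux differentiable at $u^*$, the map $w \mapsto \delta_w\mathcal{R}(u^*)$ is a bounded linear operator $U \to V^*$, and I would take $M$ to be its operator norm, which is the continuity constant already appearing in the statement, so that $\|\delta_w\mathcal{R}(u^*)\|_{V^*} \leq M\|w\|_U$. Using $\mathcal{R}(u^*) = 0$ and the triangle inequality then gives
\[
\|\mathcal{R}(u)\|_{V^*} = \|\delta_w\mathcal{R}(u^*) + R(u)\|_{V^*} \leq M\|w\|_U + L\|w\|_U^2 = M\|w\|_U\left(1 + \tfrac{L}{M}\|w\|_U\right).
\]

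The final step is to choose $\delta_2$ small enough to absorb the quadratic term: if $\|w\|_U < \tfrac{M\epsilon}{L(1-\epsilon)}$ — interpreted as $+\infty$ when $L = 0$ — then $\tfrac{L}{M}\|w\|_U < \tfrac{\epsilon}{1-\epsilon}$, hence $1 + \tfrac{L}{M}\|w\|_U < \tfrac{1}{1-\epsilon}$, which yields \eqref{eqUppBoundNL}. Taking $\delta_2 = \min\!\left(r_0, \tfrac{M\epsilon}{L(1-\epsilon)}\right)$ guarantees both that the remainder bound is valid and that the absorption works. Combining this lemma with the lower-bound lemma and setting $\delta = \min(\delta_1,\delta_2)$ then proves \Cref{propNonlinearEstimate}. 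I do not expect a genuine obstacle here, since the argument is the exact dual of the lower-bound case; the only points requiring care are identifying $M$ with the operator norm of the linearization $\delta_{\cdot}\mathcal{R}(u^*)$ and confirming that the remainder estimate from the previous proof applies unchanged.
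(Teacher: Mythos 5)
Your proposal is correct and follows essentially the same argument as the paper: the same Taylor expansion with the remainder $R(u)$ bounded by $L\|u-u^*\|_U^2$, the same triangle inequality, and the same choice $\delta_2=\min\bigl(r_0,\tfrac{M\epsilon}{(1-\epsilon)L}\bigr)$. Your factoring of $M\|w\|_U\bigl(1+\tfrac{L}{M}\|w\|_U\bigr)$ is just an algebraic rearrangement of the paper's absorption step, and your identification of $M$ as the operator norm of $\delta_{\cdot}\mathcal{R}(u^*)$ correctly makes explicit what the paper leaves implicit.
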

\begin{proof}
Again, we proceed by assuming that $||u-u^*||_U<r_0$ and invoking Taylor's theorem. Taking the remainder $R$ and $w=u-u^*$ as before, we observe that 
\begin{equation}\begin{split}
||\mathcal{R}(u)||_{V^*}=& \left|\left|\delta_w\mathcal{R}(u^*)+R(u)\right|\right|_{V^*}\\
\leq & \left|\left|\delta_w\mathcal{R}(u^*)\right|\right|_{V^*}+\left|\left|R(u)\right|\right|_{V^*}\\
\leq & M||u-u^*||_U+L||u-u^*||_{U}^2.
\end{split}
\end{equation}
Thus, if $||u-u^*||_U < \frac{M\epsilon}{L(1-\epsilon)}$, interpreting the right-hand side of the inequality as $+\infty$ if $L=0$, we have that $L||u-u^*||_{U}^2\leq \frac{M\epsilon}{(1-\epsilon)}||u-u^*||_{U}$ and 
\begin{equation}
||\mathcal{R}(u)||_{V^*}\leq \left(M+\frac{M\epsilon}{(1-\epsilon)}\right)||u-u^*||_{V^*}=\frac{M}{1-\epsilon}||u-u^*||_{V^*}. 
\end{equation}
By taking $\delta_2=\min\left(r_0,\frac{M\epsilon}{(1-\epsilon)L}\right)$ we complete the proof. 
\end{proof}

\end{document}